\documentclass{amsart}
\usepackage{amsmath,amssymb}
\usepackage{xcolor}
\def\Z{\mathbb{Z}}

\def\R{\mathbb{R}}

\def\F{\mathcal{F}}
\newcommand{\C}{\mathbb{C}}

\newcommand{\N}{\mathbb{N}}

\newcommand{\res}{\hbox{\rm res}}
\newcommand{\tr}{\hbox{\rm tr}}

\newcounter{thm}
\setcounter{thm}{0}
\newcounter{ex}
\setcounter{ex}{0}
\newcounter{re}
\setcounter{re}{0}
\newtheorem{Theorem}[thm]{Theorem}
\newtheorem{Lemma}[thm]{Lemma}
\newtheorem{Corollary}[thm]{Corollary}
\newtheorem{Proposition}[thm]{Proposition}

\newtheorem{remark}[thm]{Remark}
\newtheorem{Definition}[thm]{Definition}

\title[Rigid body and renormalized traces]{Rigid body equations on spaces of pseudo-differential operators with 
renormalized trace}
\author[J.-P. Magnot and E.G. Reyes]{Jean-Pierre Magnot$^1$ and 
Enrique G. Reyes$^2$}

\address{\small $^1$: {LAREMA, Universit\'e d’Angers, 2 Bd Lavoisier, 
49045 Angers cedex 1, France and Lyc\'ee Jeanne d'Arc, 40 avenue de Grande Bretagne, 63000 Clermont-Ferrand, 
France}}
\email{\small magnot@math.univ-angers.fr; jean-pierr.magnot@ac-clermont.fr}
\address{\small $^2$:
	Departamento de Matem\'{a}tica y Ciencia de la Computaci\'{o}n,
	Universidad de Santiago de Chile, Casilla 307 Correo 2, Santiago,
	Chile. }\email{\small enrique.reyes@usach.cl;
	e\_g\_reyes@yahoo.ca}
\begin{document}

\begin{abstract}
We equip the regular Fr\'echet Lie group of invertible, odd-class, classical pseudodifferential
operators $Cl^{0,*}_{odd}(M,E)$ ---in which $M$ is a compact smooth manifold and $E$ a (complex) vector bundle
over $M$--- with pseudo-Riemannian metrics, and we use these metrics to introduce a large class of 
rigid body equations. We adapt to our infinite-dimensional setting  Manakov's classical observation on 
the integrability of Euler's equations for the rigid body, and we show that our equations can be written in Lax 
form (with parameter) and that they admit an infinite number of integrals of motion. We also prove the existence 
of metric connections, we show that our rigid body equations determine geodesics on $Cl^{0,*}_{odd}(M,E)$, and we 
present rigorous formulas for the corresponding curvature and sectional curvature. Our main tool is the theory of 
renormalized traces of pseudodifferential operators on compact smooth manifolds without boundary. 
\end{abstract}

\maketitle

\textit{Keywords:} rigid body equations, pseudodifferential operators, renormalized traces, integrability.

\smallskip

\smallskip

\textit{MSC(2010): 58B20, 58B25, 70E45, 37K45.} 
\section{Introduction}
{
In this paper we continue our work on Mathematical Physics themes posed on 
spaces built out of {\em non-formal} pseudodifferential operators. In \cite{MR2020} we introduced a 
Kadomtsev-Petviashvili hierarchy with the help of odd-class non-formal pseudodifferential operators, its 
importance being that our new KP hierarchy ``covers" a KP hierarchy 
posed on spaces of symbols, that is, on equivalence classes of non-formal pseudodifferential operators; here we 
consider analogues of the rigid body equation on Fr\'echet Lie groups of non-formal pseudodifferential operators. { We recall that the rigid body equation appears thus:

We fix a Lie group $G$ with Lie algebra $\mathcal{G}$. We recall that 
the functional derivative of a smooth function $f : \mathcal{G}^{\ast}
\rightarrow {\bf R}$ at $\mu \in \mathcal{G}^{\ast}$ is the unique
element $\delta f/\delta \mu$ of $\mathcal{G}$ determined by
\begin{equation}
\left< \nu \, , \frac{\delta \, f}{\delta \, \mu} \right> \; =
\left.\frac{d}{d\,\epsilon}\right|_{\epsilon = 0} f(\mu + \epsilon\,\nu) = T_{\mu}f
\cdot \nu                                                                   \label{gradiente}
\end{equation}
for all $\nu \in \mathcal{G}^{\ast} (=T_\mu \mathcal{G}^\ast)$, in which $< \, ,
\, >$ denotes a natural pairing between $\mathcal{G}$ and $\mathcal{G}^{\ast}$, and that 
(see \cite[p. 129]{BP} or \cite[Chapter 9]{Holm}) 
the Lie-Poisson bracket on the dual space $\mathcal{G}^{\ast}$ is defined as follows:
for all smooth functions $F, G : \mathcal{G}^{\ast} \rightarrow {\bf R}$ and $\mu \in \mathcal{G}^{\ast}$,
\begin{equation}
\{F \, , \, G \}(\mu) = \left< \mu \, , \left[ \frac{\delta \,
F}{\delta \, \mu} \, ,
      \, \frac{\delta \, G}{\delta \, \mu} \right] \right>  \; .             \label{lie-poisson0}
\end{equation}
If $\mathcal{G}^{\ast}$ is equipped with the
Lie-Poisson structure (\ref{lie-poisson0}), the Hamiltonian vector
field corresponding to a function $H :\mathcal{G}^{\ast} \rightarrow {\bf
R}$ acts on smooth functions $F :\mathcal{G}^{\ast} \rightarrow {\bf R}$
as
\begin{equation} \label{vector}
X_{H}(\mu) \cdot F = \{ H , F \} (\mu) = \left< \mu \, , \, \left[ \frac{\delta
H}{\delta \mu} \, , \, \frac{\delta F}{\delta \mu} \right] \right> \; ,
\end{equation}
and it follows that the Hamiltonian equation of motion on $\mathcal{G}^\ast$ is 
\[
\frac{d\, \mu}{dt}=X_{H}(\mu)=\left< \mu\, , \left[\frac{\delta H}{\delta \mu}\, , \, \cdot\, \right]\right>\; .
\]
Now we assume that there exists a non--degenerate 
pairing $< ~ , ~ >$ between $\mathcal{G}^{\ast}$ and $\mathcal{G}$.
Then, we can write the equation of motion as an equation on $\mathcal{G}$,
\begin{equation} \label{wee}
\left< \frac{dP}{dt}\, , \, \cdot \, \right> \; = \; 
\left< P\, , \left[ \frac{\delta H}{\delta \mu} ,\, \cdot\,  \right] \right> 
\end{equation}
We call (\ref{wee}) the Euler equation {\em in weak form}. There are two paths we can take. First, if the
pairing between  $\mathcal{G}^{\ast}$ and $\mathcal{G}$ is, in addition, symmetric and infinitesimally
$Ad$-invariant, this is, it satisfies
\begin{equation}
< P \, , \, [Q \, , \, R] > \; = \; < [R \, , \, P] \, , \, Q > \; , \quad \quad P,Q,R \in \mathcal{G} \; . 
\label{ad}
\end{equation}
then we can write Equation (\ref{wee}) in Lax form, 
\begin{equation} \label{lax10}
\frac{d \, P}{d t} = \left[ P \, , \frac{\delta \, H}{\delta \, \mu} \right] \; .
\end{equation}
Following Berezin and Perelomov, see \cite{BP}, we call Equation (\ref{lax1}) the Euler equation or, the rigid
body equation posed on $\mathcal{G}$. Second, if the pairing between $\mathcal{G}^{\ast}$ and $\mathcal{G}$ is
symmetric, but not necessarily infinitesimally $Ad$-invariant, we define an adjoint map with respect to the
pairing, $ad_{\mathbb{A}}\,$, via the equation
$$
\left< [P,Q] , R \right> = - \left< Q, ad_{\mathbb{A}}(P)\cdot R \right> \; .
$$
Then, Equation (\ref{wee})  can be written as
\begin{equation} \label{lax22}
\frac{d \, P}{d t} = - ad_{\mathbb{A}}\left(\frac{\delta \, H}{\delta \, \mu}\right)\cdot P\; .
\end{equation}
This equation is an evident generalization of (\ref{lax10}); we also call it the Euler equation posed on
$\mathcal{G}$. If the pairing $< ~ , ~ >$ is symmetric and 
positive-definite, then (\ref{lax22}) determines geodesics on $G$, see for instance \cite{T}.

Now we can state our motivation for studying the rigid body equation (in their versions (\ref{wee}) and 
(\ref{lax22})) in the (non-commutative) setting of rigorous pseudo-differential operators: we are inspired by 
Arnold's seminal work \cite{fourier} and by the non-commutative version of the Korteweg-de Vries
(KdV) equation considered by Berezin and Perelomov in \cite{BP}. In both cases they consider motion on 
infinite-dimensional Lie groups and Lie algebras, including geodesic motion. Now, previous research
stemming from \cite{fourier} has concentrated on providing rigorous analytic foundations for the beautiful 
results on Riemannian geometry of diffeomorphism groups appearing in \cite{fourier}, and on the study of some very 
interesting equations posed on these infinite-dimensional groups, see for instance \cite{EM,BBCEK,C} or the 
reviews \cite{Holm,T}. 
We  wonder if we can study (geodesic) motion on groups 
 which are natural alternatives to diffeomorphism groups in a fully rigorous way. 

Indeed, we observe herein that {\em there exist Fr\'echet Lie groups of non-formal pseudodifferential operators 
that can be equipped with (weak) pseudo-Riemannian metrics}, and that it is very feasible to investigate rigid 
body equations posed on them. Let us explain briefly our geometric setting: 
we fix a compact smooth manifold without boundary $M$ and a (complex) vector bundle over $M$, and we prove that 
the set $G=Cl^{0,*}_{odd}(M,E)$ of all zero order, invertible, and odd-class non-formal classical pseudodifferential 
operators acting on sections of $E$, can be equipped with the structure of a regular Fr\'echet Lie group. 
Once we fix $G$, we need to define a suitable linear functional that replaces standard trace, in order to consider 
formulas analogous to 
$$(A,B)_{\mathbb{A}} \mapsto tr\left(A\, \mathbb{A}(B) \right)$$ 
which (depending on the operator $\mathbb{A}$) defines a metric on spaces of matrices.  
Now, the classical trace $tr$ of (trace-class) operators on a Hilbert space is not defined on the entire Lie 
algebra of $G.$ We choose a linear extension of $tr$, called the $\zeta-$renormalized trace or weighted trace, on   
the class $Cl^{0}_{odd}(M,E)$ of odd-class non-formal classical pseudodifferential operators introduced by 
Kontsevich and Vishik \cite{KV1,KV2} and fully described in \cite{PayBook,Scott}.
Properties of $\zeta-$renormalized traces allow us to endow the group $G$
 with the structure of an infinite-dimensional (weak) pseudo-Riemannian manifold by extending the results of 
 \cite{Ma2020-4} to our setting.
}

Our research differs from previous investigations carried out in the spirit of Arnold's \cite{fourier}, 
such as \cite{BBCEK,C,Holm,T} or \cite{GPoR}, in two ways. First, our rigid body equation is a bona fide ordinary 
differential equation (similar in this respect to the classical rigid body equation), due to the way the Lie 
bracket is defined in our context; it does not reduce to a partial differential equation as it happens if we study 
Euler's equations on diffeomorphism groups, see \cite{BBCEK,C,Holm,GPoR,T}. Second, it is a {\em non-commutative} 
nonlinear equation, since it is an nonlinear equation for an unknown rigorous pseudodifferential operator. In this 
latter aspect, our Euler equation is similar in spirit to the non-commutative version of the Korteweg-de Vries 
(KdV) equation by Berezin and Perelomov, see \cite{BP}, and to the equations considered by Olver and Sokolov in 
\cite{OS}, although, in contradistinction with {\em e.g.} \cite{OS}, our study belongs to global 
analysis rather than to formal geometry, because of the presence of non-formal (Fr\'echet) Lie groups of 
non-formal pseudo-differential operators and of traces which fully extend the trace of a finite rank operator.

Interestingly, {\em our version of Euler's equation shares with the classical rigid body 
equation the important properties of admitting a parameter-depending Lax formulation and integrals 
of motion}. We check this claim by adapting Manakov's seminal observation on the integrability of Euler's 
equation appearing in \cite{Ma}, to our framework. For example, in the particular case in which the manifold $M$ 
mentioned above is simply $S^1$, and the unknown pseudodifferential operator is a function 
$X : S^1 \rightarrow \mathbb{C}$, we can check that the equation
\begin{equation} \label{eqs62}
 \frac{dX}{dt} = X (\Delta + \pi) X^* (\Delta + \pi)^{-1} - X X^* \; ,
\end{equation}
where $\pi$ is the $L^2-$ orthogonal projection on the kernel of the Laplacian, admits infinitely 
many independent integrals of motion (at least for a large class of initial conditions, see Subsection 6.2). 

\smallskip

We organize this work as follows. In Section 2 we present a 
 quick survey of the properties of pseudodifferential 
operators that we use, including the Fr\'echet structure of  $Cl^{0,*}_{odd}(M,E)$. We also 
introduce the Wodzicki residue and renormalized traces, and we state the facts that make 
them interesting  objects for geometry. 
In Section 3 we show how to construct non-degenerate pairings on spaces of non-formal classical 
pseudodifferential operators using renormalized traces. In Section 4 we use these pairings to construct 
right-invariant pseudo-Riemannian metrics on  $Cl^{0,*}_{odd}(M,E)$, we introduce our rigid 
body equations using variational methods, we show that they can be written as Lax equations depending on a 
parameter, and we present integrals of motion. In Section 5 we make some remarks on the pseudo-Riemannian geometry 
of the Fr\'echet Lie group $Cl^{0,*}_{odd}(M,E)$, and we prove that, as in
more classical contexts, our rigid body equations determine geodesics. We finish in Section 6 with
two examples: in 6.1 we present equations arising from a metric depending on the heat operator,
and in 6.2 we analyse a class of equations which includes (\ref{eqs62}). In particular, by direct computations on 
renormalized traces, we show independence of integrals of motion. 
}

\section{Preliminaries}
\subsection{Preliminaries on classical pseudodifferential operators}
We introduce groups and algebras of non-formal pseudodifferential operators needed to set up our equations. 
Basic definitions are valid for real or complex finite-dimensional vector bundles $E$ 
over a compact manifold $M$ without boundary whose typical fiber is a finite-dimensional 
{real or complex} vector space $V$. 
We begin with the following definition after \cite[Section 2.1]{BGV}.

\begin{Definition} 
	The graded algebra of differential operators acting on the space of 
	smooth sections $C^\infty(M,E)$ is the algebra $DO(E)$ generated 
	by:
	
	$\bullet$ Elements of $End(E),$ the group of smooth maps $E \rightarrow 
	E$ leaving each fibre globally invariant and which restrict to linear 
	maps on each fibre. This group acts on sections of $E$ via (matrix) 
	multiplication;
	
	$\bullet$ The differentiation operators
	$$\nabla_X : g \in C^\infty(M,E) \mapsto \nabla_X g$$ where $\nabla$ 
	is a connection on $E$ and $X$ is a vector field on $M$.
\end{Definition}

Multiplication operators are operators of order $0$; differentiation 
operators and vector fields are operators of order 1. In local 
coordinates, a differential operator of order $k$ has the form
$ P(u)(x) = \sum p_{i_1 \cdots i_r}(x) \nabla_{x_{i_1}} \cdots 
\nabla_{x_{i_r}} u(x) \; , \quad r \leq k \; ,$
{ in which $u$ is a (local) section} and the coefficients $p_{i_1 \cdots i_r}$ can be matrix-valued.
The algebra $DO(M,E)$ is filtered by order: we note by $DO^k(M,E)$,$k \geq 0$, the differential operators of 
order less or equal than $k$.  

Now we embed $DO(M,E)$ into the algebra of classical 
pseudodifferential operators $Cl(M,E)$. We need to assume that the reader is familiar with the basic facts on 
pseudodifferential operators 
defined on a vector bundle $E \rightarrow M$; these facts can be found for instance 
in \cite{Gil}, in the review \cite[Section 3.3]{Pay2014}, and in the papers \cite{BK} and \cite{Wid} in which
the authors construct a global symbolic calculus for pseudodifferential operators showing, for instance, how the 
geometry of the base manifold $M$ furnishes an obstruction to generalizing 
local formulas of composition and inversion of symbols.

\vskip 6pt
\noindent
\textbf{Notations.} 
We note by  $ PDO (M,E) $ the space of pseudodifferential operators on smooth sections of $E$, see 
\cite[p. 91]{Pay2014}; by $ PDO^o (M,E)$ the space of pseudodifferential operators of order $o$; and by 
$Cl(M,E)$ the space of classical pseudodifferential operators acting on 
smooth sections of $E$, see \cite[pp. 89-91]{Pay2014}. 
We also note by $Cl^o(M,E)= PDO^o(M,E) \cap Cl(M,E)$ the space of classical 
pseudodifferential operators of order $o$, and by 
$Cl^{o,\ast}(M,E)$ the group of units of $Cl^o(M,E)$. 

\vskip 6pt

A topology on spaces of classical pseudodifferential operators has 
been described by Kontsevich and Vishik in \cite{KV1}: it is a Fr\'echet topology (and therefore
it equips $Cl(M,E)$ with a smooth structure) such that each space 
$Cl^o(M,E)$ is closed in $Cl(M,E).$ This topology is discussed in 
\cite[pp. 92-93]{Pay2014}, see also \cite{CDMP,PayBook,Scott} for other descriptions.
We use all along this work the Kontsevich-Vishik topology. 

We set
$$ PDO^{-\infty}(M,E) = \bigcap_{o \in \Z} PDO^o(M,E) \; .$$
It is well-known that $PDO^{-\infty}(M,E)$ is a two-sided ideal 
of $PDO(M,E)$, see e.g. \cite{Gil,Scott}. 
This fact allows us to define the quotients
$$\mathcal{F}PDO(M,E) = PDO(M,E) / PDO^{-\infty}(M,E)\; ,$$ 
$$\F Cl(M,E) = Cl(M,E) /  PDO^{-\infty}(M,E)\; ,$$
and
$$ \quad \F Cl^o(M,E) = Cl^o(M,E)  / PDO^{-\infty}(M,E)\; .$$

The script font $\F$ stands for {\it  formal } pseudodifferential operators. The quotient $\mathcal{F}PDO(M,E)$ 
is an algebra isomorphic to the space of formal symbols, see \cite{BK}, 
and the identification is a morphism of $\mathbb{C}$-algebras for 
the usual multiplication on formal symbols (appearing for instance in \cite[Lemma 1.2.3]{Gil} and
\cite[p. 89]{Pay2014},  and in \cite[Section 1.5.2, Equation (1.5.2.3)]{Scott} for the particular case of
classical symbols). 


\begin{Theorem} \label{2} 
	The groups $Cl^{0,*}(M,E)$ and $\mathcal{F}Cl^{0,*}(M,E)$, in which  
	${\mathcal F}Cl^{0,*}(M,E)$ is the group of units of the algebra 
	${\mathcal F}Cl^{0}(M,E)$, are regular Fr\'echet Lie groups equipped with smooth exponential maps. 
	Their Lie algebras are $Cl^{0}(M,E)$ and ${\mathcal F}Cl^{0}(M,E)$ respectively.
\end{Theorem}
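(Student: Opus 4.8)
The plan is to realize $Cl^{0,*}(M,E)$ as the group of units of the Fréchet algebra $Cl^{0}(M,E)$ and to exploit the spectral invariance of classical order-zero operators inside the bounded operators on $L^2$. First I would record that, with the Kontsevich--Vishik topology described above, $Cl^{0}(M,E)$ is a Fréchet algebra: it is closed in $Cl(M,E)$, and composition is continuous because the Kontsevich--Vishik seminorms control the full symbol together with all of its derivatives, which are exactly the data entering the asymptotic composition formula. The Calderón--Vaillancourt estimate then shows that the inclusion $Cl^{0}(M,E)\hookrightarrow \mathcal{L}(L^2(M,E))$ into the Banach algebra of bounded operators is continuous, the operator norm being dominated by finitely many of these seminorms.

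The core analytic input is spectral invariance: if $A\in Cl^{0}(M,E)$ is invertible in $\mathcal{L}(L^2(M,E))$, then $A$ is elliptic, a parametrix $B\in Cl^{0}(M,E)$ satisfies $\mathrm{Id}-AB\in PDO^{-\infty}(M,E)$, and since $A$ is invertible on every $H^{s}(M,E)$ by elliptic regularity the correction $A^{-1}(\mathrm{Id}-AB)$ lies again in $PDO^{-\infty}(M,E)$; hence $A^{-1}\in Cl^{0}(M,E)$. Combined with the continuity of the inclusion into $\mathcal{L}(L^2)$ and the openness of the units there, this shows that $Cl^{0,*}(M,E)$ is the preimage of $GL(L^2)$, hence open in $Cl^{0}(M,E)$, so it is a Fréchet manifold modelled on $Cl^{0}(M,E)$. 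That $Cl^{0}(M,E)$ is in fact a continuous inverse algebra---inversion is continuous, hence (by the usual bootstrap using $D(\mathrm{inv})(A)\cdot H=-A^{-1}HA^{-1}$) smooth---can be read off the recursive symbol calculus, the symbol of $A^{-1}$ being assembled from the pointwise inverse of the leading symbol and finitely many continuous algebraic operations at each homogeneity level. Multiplication being continuous and bilinear, hence smooth, $Cl^{0,*}(M,E)$ is a Fréchet Lie group whose Lie algebra is $Cl^{0}(M,E)$ with the commutator bracket, since the tangent space at the identity of an open subset of $Cl^{0}(M,E)$ is $Cl^{0}(M,E)$ itself.

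For regularity I would solve, for any smooth curve $t\mapsto X(t)\in Cl^{0}(M,E)$, the linear evolution equation
\[
g'(t)=g(t)\,X(t),\qquad g(0)=\mathrm{Id},
\]
and show that the evolution map is smooth with values in $Cl^{0,*}(M,E)$. The strategy is to solve this equation first in each Banach algebra $\mathcal{L}(H^{s}(M,E))$, where Picard iteration (the time-ordered Volterra series) furnishes a unique solution, and then to prove that the common solution actually lies in $Cl^{0}(M,E)$ and is smooth for the Fréchet topology. I expect this to be the main obstacle: one must control the Kontsevich--Vishik seminorms of the iterated integrals $\int X(s_1)\cdots X(s_n)\,ds$ uniformly enough to obtain convergence in $Cl^{0}(M,E)$ rather than merely in operator norm, and this is precisely where the tame structure of the composition calculus (order-zero operators form an algebra, and the lower-order corrections are controlled) is indispensable. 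The smooth exponential map is then obtained for free as the evolution of constant curves, $\exp(X)=g(1)$ for $X(t)\equiv X$, its smoothness being a consequence of the smoothness of the evolution map.

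Finally, the formal statement is handled by rerunning the argument for the quotient. Since $PDO^{-\infty}(M,E)$ is a closed two-sided ideal, $\mathcal{F}Cl^{0}(M,E)=Cl^{0}(M,E)/PDO^{-\infty}(M,E)$ inherits a Fréchet algebra structure; here the role of spectral invariance is played by the purely symbolic criterion that a formal operator is invertible in $\mathcal{F}Cl^{0}(M,E)$ if and only if its leading symbol is pointwise invertible on the cosphere bundle $S^{*}M$, an open condition that makes the group of units open and inversion smooth via the same recursive symbol construction. The evolution equations likewise descend to the quotient, so $\mathcal{F}Cl^{0,*}(M,E)$ is a regular Fréchet Lie group with Lie algebra $\mathcal{F}Cl^{0}(M,E)$, and the canonical projection $Cl^{0,*}(M,E)\to\mathcal{F}Cl^{0,*}(M,E)$ is a smooth group homomorphism.
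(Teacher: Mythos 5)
Your first two paragraphs follow essentially the same route as the paper: spectral invariance gives $Cl^{0,*}(M,E)=Cl^{0}(M,E)\cap GL(L^{2}(M,E))$, hence the unit group is open in the Fr\'echet algebra $Cl^{0}(M,E)$ and inversion is smooth, and the Lie group structure with Lie algebra $Cl^{0}(M,E)$ follows. This is exactly the content the paper extracts from \cite{Ma2006} and \cite{Gl2002}, and that part of your argument is sound, modulo the (standard, but unstated) wave-packet argument needed to justify your claim that invertibility in $\mathcal{L}(L^{2})$ forces ellipticity of a classical order-zero operator.

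The genuine gap is in your treatment of regularity and of the smooth exponential map, which is the actual substance of the theorem. Solving $g'=gX(t)$ by Picard iteration in each $\mathcal{L}(H^{s})$ is easy, but the step you yourself flag as ``the main obstacle'' --- convergence of the time-ordered series, and smooth dependence on $X$, in the Kontsevich--Vishik Fr\'echet topology of $Cl^{0}(M,E)$ --- is precisely the theorem to be proved, and the tool you invoke for it is not available. Continuity of composition only gives, for each continuous seminorm $p$, an estimate of the form $p(AB)\le C\, q(A)\, q(B)$ with a possibly higher seminorm $q$; iterating this loses control at every order, so the Dyson series has no reason to converge in $Cl^{0}(M,E)$: the algebra is not locally $m$-convex in any established sense, and no ``tame'' estimates for the symbol calculus are proved or cited, so the appeal to ``the tame structure of the composition calculus'' is a placeholder rather than an argument. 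This is exactly why the paper does not argue this way: it invokes Gl\"ockner's theorem \cite{Gl2002}, which asserts that the unit group of a Mackey-complete continuous inverse algebra (any Fr\'echet algebra with open unit group and continuous inversion qualifies, completeness being automatic) is a regular Fr\'echet Lie group with smooth exponential map; there the exponential and the evolution operators come from the holomorphic functional calculus built on Cauchy integrals of the resolvent, not from power series or iteration. Your closing paragraph on $\mathcal{F}Cl^{0,*}(M,E)$ inherits the same gap (``the evolution equations likewise descend''); the correct fix is again to apply Gl\"ockner's theorem directly to the quotient Fr\'echet algebra $\mathcal{F}Cl^{0}(M,E)$, whose unit group you do correctly identify as open via the principal-symbol criterion.
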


Regularity is reviewed in \cite{MR2016,MR2020} and also in Paycha's lectures,
see \cite[p. 95]{Pay2014}. The Lie  group structure of $Cl^{0,*}(M,E)$ is discussed in 
\cite[Proposition 4]{Pay2014}. Theorem \ref{2} is essentially proven in \cite{Ma2006}: it is noted in
this reference that the results of \cite{Gl2002} imply that the 
group $Cl^{0,*}(M,E)$ (resp. $\F Cl^{0,*}(M,E)\,$) is open in 
$Cl^0(M,E)$ (resp. $\F Cl^{0}(M,E)\,$) and that therefore it is a regular 
Fr\'echet Lie group.

Now we will introduce our main classes of classical pseudodifferential operators. First of all we recall the 
following:

If $A \in Cl^o(M,E)$, the symbol $\sigma(A)(x,\xi)$ has an asymptotic expansion of the form
\begin{equation}  \label{expansion}
\sigma(A)(x,\xi) \sim \sum_{j=0}^\infty \sigma_{o-j}(A)(x,\xi)\; , \quad (x,\xi) \in T^\ast M\; ,
\end{equation}
in which each $\sigma_{o-j}(A)(x,\xi)$ satisfies the homogeneity condition
$$
\sigma_{o-j}(A)(x,t\,\xi) = t^{o-j}\sigma_{o-j}(A)(x,\xi)\; \quad \quad \mbox{ for every } t > 0\; .
$$
The function $\sigma_{o}(A)(x,\xi)$ is the principal symbol of $A$. We define

\begin{Definition} \label{d7} 
	A classical pseudodifferential operator $A$ on $E$ is called 
	\begin{itemize}
	\item {\bf odd 
	class} if and only if for all $n \in \Z$ and all $(x,\xi) \in T^*M$ we have:
	$$ \sigma_n(A) (x,-\xi) = (-1)^n  \sigma_n(A) (x,\xi)\; ,$$
	and
	\item  {\bf even 
	class} if and only if for all $n \in \Z$ and all 
	$(x,\xi) \in T^*M$ we have:
	$$ \sigma_n(A) (x,-\xi) = (-1)^{n+1}  \sigma_n(A) (x,\xi)\; .$$
	\end{itemize} 
\end{Definition}

Odd class pseudodifferential operators were introduced in \cite{KV1,KV2}; they are called 
``even-even pseudodifferential operators'' in the treatise \cite{Scott}. 
For instance, recalling Definition 1, we see that {\em all differential operators} are odd class. 

Hereafter, the subscript ${}_{odd}$ 
(resp. ${}_{even}$) attached to a given space of (formal) pseudodifferential 
operators will refer to the set of all odd (resp. even) class (formal) pseudodifferential 
operators belonging to that space.

\smallskip

We need the following result, already essentially present in \cite{KV1,Scott}:

\begin{Lemma}
	$Cl_{odd}(M,E)$ and $Cl^0_{odd}(M,E)$ are associative algebras.
\end{Lemma}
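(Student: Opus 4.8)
The plan is to observe first that associativity is automatic: both $Cl_{odd}(M,E)$ and $Cl^0_{odd}(M,E)$ are subspaces of the associative algebra $Cl(M,E)$, so the only thing requiring proof is that these subspaces are closed under the algebra operations. Closure under addition and scalar multiplication is immediate, since the odd-class condition $\sigma_n(A)(x,-\xi)=(-1)^n\sigma_n(A)(x,\xi)$ of Definition \ref{d7} is linear in $A$ for each fixed $n$. The substantive step is to show that the composition $AB$ of two odd-class operators $A,B$ is again odd class, and---for the second algebra---that the order-$0$ condition is preserved, which is clear because the order of a composition is the sum of the orders, so $0+0=0$ (and the identity, being a differential operator, is odd class, giving unitality).

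To prove closure under composition I would work in a local chart, which is legitimate because the odd-class property is invariant under changes of coordinates (this is part of the Kontsevich--Vishik analysis in \cite{KV1,KV2}); it then suffices to check the parity of each homogeneous component. Recall that modulo smoothing operators the symbol of $AB$ is given by the asymptotic composition formula, whose generic term is a constant multiple of $\partial_\xi^\alpha \sigma(A)\, D_x^\alpha \sigma(B)$. The key bookkeeping is this: if $\sigma_p(A)$ is homogeneous of degree $p$ and satisfies $\sigma_p(A)(x,-\xi)=(-1)^p\sigma_p(A)(x,\xi)$, then, since each $\xi$-derivative lowers the degree of homogeneity by one and flips the parity by one sign, $\partial_\xi^\alpha \sigma_p(A)$ is homogeneous of degree $p-|\alpha|$ and obeys $(\partial_\xi^\alpha\sigma_p(A))(x,-\xi)=(-1)^{p-|\alpha|}(\partial_\xi^\alpha\sigma_p(A))(x,\xi)$, whereas the $x$-derivatives in $D_x^\alpha \sigma_q(B)$ affect neither the degree nor the $\xi$-parity. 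Hence the term $\partial_\xi^\alpha \sigma_p(A)\, D_x^\alpha \sigma_q(B)$ is homogeneous of degree $(p-|\alpha|)+q$ and has parity $(-1)^{(p-|\alpha|)+q}$ under $\xi\mapsto -\xi$, which is exactly the parity prescribed by the odd-class condition for that degree. Summing the contributions to a fixed degree $n=p+q-|\alpha|$ then shows that $\sigma_n(AB)$ has parity $(-1)^n$, as required.

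One point I would treat carefully is that the composition formula holds only up to a smoothing operator. This causes no difficulty, because a smoothing operator lies in $PDO^{-\infty}(M,E)$ and therefore has vanishing homogeneous symbol components; such an operator is trivially both odd and even class, so the residual term cannot disturb the parities computed above. The main (though modest) obstacle is thus not a single hard estimate but the combination of two facts that are invoked rather than rederived here: the coordinate-invariance of the odd-class condition, which legitimizes the local computation, and the precise homogeneity-and-parity bookkeeping in the asymptotic composition formula. With these in hand, $Cl_{odd}(M,E)$ is closed under composition and hence an associative algebra, and restricting to order $0$ gives that $Cl^0_{odd}(M,E)$ is an associative (unital) subalgebra.
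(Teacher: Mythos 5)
Your proposal is correct and follows essentially the same route as the paper: a local computation with the asymptotic composition formula, tracking how $\xi$-derivatives shift both the homogeneity degree and the sign under $\xi\mapsto-\xi$ so that each term of degree $n=p+q-|\alpha|$ acquires exactly the parity $(-1)^n$, with the order-$0$ case reduced to the standard fact that zero-order classical operators form an algebra. Your additional remarks on the smoothing remainder, coordinate invariance, and unitality are sound refinements of the same argument rather than a different approach.
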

\begin{proof} 
We work locally. Let $A,B$ be two odd class pseudodifferential operators of order $m$ and $m'$ respectively; 
the homogeneous pieces of the symbols of $A,B,AB$ are related via (see \cite[Section 1.5.2, Equation (1.5.2.3)]
{Scott})
$$
\sigma_{m+m'-j}(AB)(x,\xi) = \sum_{|\mu|+k+l=j}\frac{1}{\mu!}\partial_\xi^\mu \sigma_{m-k}(A)(x,\xi)
D_x^\mu \sigma_{m'-l}(B)(x,\xi)\; ,
$$
in which $|\mu|$ is the length of the multi-index $\mu$. We have, using the first equation appearing in 
Definition \ref{d7},
$$
\partial_\xi^\mu \sigma_{m-k}(A)(x,-\xi)= (-1)^{m-k+|\mu|}\partial_\xi^\mu \sigma_{m-k}(A)(x,\xi)
$$
and
$$
D_x^\mu \sigma_{m'-l}(B)(x,-\xi)= (-1)^{m'-l}D_x^\mu\sigma_{m'-l}(B)(x,\xi)\; ,
$$
so that 
$$
\sigma_{m+m'-j}(AB)(x,-\xi) = \sum_{|\mu|+k+l=j}\frac{1}{\mu!}(-1)^{m-k+|\mu|+m'-l}\partial_\xi^\mu 
\sigma_{m-k}(A)(x,\xi) D_x^\mu \sigma_{m'-l}(B)(x,\xi)\; .
$$
Changing $+|\mu|$ for $-|\mu|$ in $(-1)^{m-k+|\mu|+m'-l}$ and using $|\mu|+k+l=j$ we obtain
\begin{eqnarray*}
\sigma_{m+m'-j}(AB)(x,-\xi) &=& (-1)^{m+m'-j}\sum_{|\mu|+k+l=j}\frac{1}{\mu!}\partial_\xi^\mu 
\sigma_{m-k}(A)(x,\xi) D_x^\mu \sigma_{m'-l}(B)(x,\xi) \\
 &=& (-1)^{m+m'-j}\sigma_{m+m'-j}(AB)(x,\xi)\; ,
\end{eqnarray*}
	which proves the first claim. 
	That $Cl^0_{odd}(M,E)$ is an associative algebra now follows from the standard fact that zero-order classical 
	pseudodifferential  operators form an algebra, see for instance the proof of Proposition 3 in \cite{Pay2014}.	
\end{proof}

\smallskip

The next proposition singles out an interesting Lie group included in $Cl_{odd}(M,E)$.

\begin{Proposition}
	The algebra $Cl_{odd}^0(M,E)$ is a closed subalgebra of 
	$Cl^0(M,E)$. Moreover, $Cl_{odd}^{0,*}(M,E)$ is 
	\begin{itemize}
		\item an open subset of $Cl^0_{odd}(M,E)$ and,
		\item a regular Fr\'echet Lie group {with Lie algebra $Cl_{odd}^{0}(M,E)$ and smooth Lie 
		bracket $[A,B] = AB - BA$}.
	\end{itemize} 
\end{Proposition}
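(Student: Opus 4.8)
The plan is to establish the three assertions in order, leveraging the structure already in place. First I would prove that $Cl_{odd}^0(M,E)$ is a closed subalgebra of $Cl^0(M,E)$. The preceding Lemma already shows it is a subalgebra, so the work is entirely topological: I must show closedness in the Kontsevich--Vishik Fr\'echet topology. The key observation is that the odd-class condition $\sigma_n(A)(x,-\xi) = (-1)^n \sigma_n(A)(x,\xi)$ is a \emph{linear} constraint on each homogeneous component $\sigma_n(A)$ of the symbol, and that the maps $A \mapsto \sigma_n(A)$ sending an operator to a fixed homogeneous piece of its symbol are continuous for the Kontsevich--Vishik topology. I would therefore write $Cl_{odd}^0(M,E) = \bigcap_{n \leq 0} \ker L_n$, where $L_n(A)(x,\xi) = \sigma_n(A)(x,-\xi) - (-1)^n \sigma_n(A)(x,\xi)$ is a continuous linear map into an appropriate space of homogeneous functions on $T^\ast M \setminus \{0\}$. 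Each $\ker L_n$ is closed, and an arbitrary intersection of closed sets is closed, giving the first claim.

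Next I would prove that $Cl_{odd}^{0,\ast}(M,E)$ is open in $Cl_{odd}^0(M,E)$. Here I would invoke Theorem \ref{2}, which tells us that $Cl^{0,\ast}(M,E)$ is open in $Cl^0(M,E)$ (via the results of \cite{Gl2002} cited immediately after that theorem). Since $Cl_{odd}^{0,\ast}(M,E) = Cl_{odd}^0(M,E) \cap Cl^{0,\ast}(M,E)$ is by definition the set of invertible elements lying inside the subalgebra, openness in the subspace topology follows immediately: it is the intersection of the open set $Cl^{0,\ast}(M,E)$ with $Cl_{odd}^0(M,E)$. The one point that requires care --- and which I expect to be the main obstacle --- is that the units of the subalgebra $Cl_{odd}^0$ should coincide with the invertible elements of the ambient algebra that happen to be odd-class; that is, if $A$ is odd-class and invertible in $Cl^0(M,E)$, then $A^{-1}$ is again odd-class. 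This is the crucial algebraic fact ensuring that $Cl_{odd}^{0,\ast}(M,E)$ is genuinely a group and not merely a monoid, and it is what makes the openness argument meaningful.

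To settle this obstacle, I would argue that the odd class is stable under inversion. One clean route is to note that odd-class operators are precisely those whose complete symbol satisfies the parity relation of Definition \ref{d7}, and to use the symbol of the inverse. Since the leading symbol of $A^{-1}$ is $\sigma_0(A)^{-1}$ and the lower-order homogeneous components of the inverse are produced recursively from those of $A$ by the same composition formula used in the Lemma (applied to $A\,A^{-1} = \mathrm{Id}$), an induction on the homogeneity degree shows that each $\sigma_{-j}(A^{-1})$ inherits the correct parity: the Identity is trivially odd-class, and matching homogeneous pieces degree by degree forces $A^{-1}$ to be odd-class whenever $A$ is. This mirrors the parity bookkeeping already carried out explicitly in the proof of the Lemma, so it is routine once set up correctly.

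Finally, having shown that $Cl_{odd}^{0,\ast}(M,E)$ is an open subset of the Fr\'echet algebra $Cl_{odd}^0(M,E)$ and is closed under the group operations, the regular Fr\'echet Lie group structure is inherited from the ambient structure. Concretely, $Cl_{odd}^{0,\ast}(M,E)$ is an open submanifold of the topological vector space $Cl_{odd}^0(M,E)$, with smooth multiplication and inversion inherited by restriction from $Cl^{0,\ast}(M,E)$; regularity and the smooth exponential map descend as well, since the exponential of an odd-class operator is odd-class (its series expansion involves only products of odd-class operators and the identity, which are odd-class by the Lemma). The Lie algebra is then the model space $Cl_{odd}^0(M,E)$ with bracket $[A,B] = AB - BA$, smooth because multiplication is smooth, which gives the last assertion.
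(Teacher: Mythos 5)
Your proof is correct, and its first half is essentially the paper's own argument: your family of continuous linear maps $L_n$ is exactly the paper's single smooth map
$\phi: Cl^0(M,E)\rightarrow \mathcal{F}Cl^{0}(M,E)$, $\phi(A)=\sum_{n\in\mathbb{N}}\bigl(\sigma_{-n}(x,\xi)-(-1)^n\sigma_{-n}(x,-\xi)\bigr)$,
unpacked degree by degree; both arguments exhibit $Cl^0_{odd}(M,E)$ as a kernel, hence closed. Where you genuinely diverge is the openness step. The paper writes $Cl^{0,*}_{odd}(M,E)=Cl^{0}_{odd}(M,E)\cap GL(H)$ with $H=L^2(M,E)$, gets openness from the openness of $GL(H)$, and then cites \cite{Gl2002,Neeb2007}; that formulation silently assumes two facts: spectral invariance (an odd-class operator invertible merely in the bounded operators on $H$ has a classical inverse of order zero) and stability of the odd class under inversion. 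You instead write $Cl^{0,*}_{odd}(M,E)=Cl^{0}_{odd}(M,E)\cap Cl^{0,*}(M,E)$, get openness from Theorem \ref{2}, and --- this is the real added value --- you isolate and actually prove the parity-stability lemma that either formulation needs: the inverse of an invertible odd-class operator is odd-class, by induction on the homogeneous components of $AA^{-1}=Id$ via the composition formula. Your induction does close: each term in the recursion for $\sigma_{-j}(A^{-1})$ carries the sign $(-1)^{-k-l+|\mu|}$ and $-k-l+|\mu|\equiv j \pmod 2$ when $|\mu|+k+l=j$, which is precisely the bookkeeping of the Lemma. So your route needs less input (no spectral invariance, only Theorem \ref{2}) and makes explicit a fact the paper leaves implicit, while the paper's route is terser and yields the stronger characterization of the unit group by $GL(H)$.

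One small repair to your final paragraph: in a Fr\'echet algebra the exponential power series need not converge, so ``the exponential of an odd-class operator is odd-class because of its series expansion'' is not a rigorous argument. Regularity and the smooth exponential map should instead be obtained the way the paper does, by applying Gl\"ockner's theorem on unit groups of continuous inverse algebras \cite{Gl2002} (see also \cite{Neeb2007}) directly to the closed Fr\'echet subalgebra $Cl^0_{odd}(M,E)$, whose group of units you have already shown to be open; every hypothesis of that theorem is established by your argument, so this substitution costs nothing.
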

\begin{proof}
	We denote by $\sigma(A)(x,\xi)$ the total formal symbol of 
	$A \in Cl^0(M,E).$ We define the function
	$$
	\phi: Cl^0(M,E)\rightarrow \mathcal{F}Cl^{0}(M,E)
	$$ as
	$$\phi(A) = 
	\sum_{n \in \mathbb{N}}\sigma_{-n}(x,\xi) - (-1)^n\sigma_{-n}(x,-\xi)\; .
	$$
	This map is smooth, and $$Cl^{0}_{odd}(M,E)= Ker(\phi),$$ which 
	shows that 	$Cl_{odd}^0(M,E)$ is a closed subalgebra of   
	$Cl^0(M,E).$ 
	Moreover, if $H = L^2(M,E),$ 
	$$Cl^{0,*}_{odd}(M,E) = Cl^{0}_{odd}(M,E)\cap GL(H),$$
	which proves that $Cl_{odd}^{0,*}(M,E)$ is open in the 
	Fr\'echet algebra $Cl^0_{odd}(M,E),$ and it follows 
	that it is a regular Fr\'echet Lie group by arguing along 
	the lines of \cite{Gl2002,Neeb2007}.
\end{proof}

We finish our preliminaries on pseudodifferential operators noting that at a formal level we have the
splitting 
$$ \mathcal{F}Cl(M,E) = \mathcal{F}Cl_{odd}(M,E) \oplus \mathcal{F}Cl_{even}(M,E)\; ,$$
and the following composition rules for formal pseudodifferential operators $A\circ B:$ 
\vskip 12pt
\begin{tabular}{|c|c|c|}
	\hline
	&&\\
	& $A$ odd class & $A$ even class \\
	&&\\
	\hline
	&&\\
	$B$ odd class & $A \circ B$ odd class & $A \circ B$ even class \\
	&&\\
	\hline
	&&\\
	$B$ even class & $A \circ B$ even class & $A \circ B$ odd class \\
	&&\\
	\hline
\end{tabular}

\smallskip

\subsection{Renormalized traces} \label{s3}

Hereafter we assume that the  typical fiber of the bundle $E$ is a complex vector space, and that 
$E$ is equipped with an Hermitian product $<\cdot ,\cdot >\,$. An excellent review of this geometric set-up 
appears in \cite[Chapters III, IV]{Wells}. This product
allows us to define the following $L^2$-inner product on sections of $E$: 
$$ \forall u,v \in C^\infty( M,E), \quad (u,v)_{L^2} = \int_{M} <u(x),v(x)> dx\; , $$
where $dx$ is a fixed Riemannian volume on $M$. 

We need to use some further notions of the theory of pseudodifferential operators. First of all, we use the inner 
product just introduced to define {\em self-adjoint} and {\em positive} pseudodifferential operators. We also
define {\em elliptic} pseudodifferential operators: a classical pseudodifferential operator $P$ of order $o$ is
elliptic if its main symbol $\sigma_o(P)(x,\xi):E_x \rightarrow E_x$ is invertible, see for instance 
\cite[Chapter IV, Section 4]{Wells} or \cite[p. 92]{Scott}; these pseudodifferential operators are also discussed
quickly in \cite[Definitions 6.17, 6.31]{PayBook}. We denote by $Ell(M,E)$ the space of all classical elliptic 
pseudodifferential operators. 

\begin{Definition}
	$Q$ is a \textbf{weight} of order $q \in \N^*$ on $E$ if and only if $Q$ is a classical, elliptic,
	self-adjoint and positive pseudodifferential operator acting on smooth sections of $E$.
\end{Definition}

Under these assumptions, the weight $Q$ has a real discrete spectrum, and 
all its eigenspaces are finite dimensional. Moreover, for such a weight $Q$ of order $q$, we can define complex 
powers of $Q$, see e.g. \cite{CDMP} or \cite[Section 7.1]{PayBook} for a quick overview of technicalities and 
further references: the powers $Q^{-s}$ of the weight $Q$ are defined for $Re(s) > 0$ using a contour integral of 
the form 
$$ Q^{-s} = \int_\Gamma \lambda^s(Q- \lambda Id)^{-1} d\lambda\; ,$$
in which $\Gamma$ is a contour around the real positive axis {that appears precisely identified in 
\cite[Section 7.1]{PayBook}. The pseudodifferential operator $Q^{-s}$ is a classical pseudodifferential
operator of order $- q\,s$. }

\smallskip

Now we let $A$ be a log-polyhomogeneous pseudodifferential operator, that is, $A$ is a pseudodifferential 
operator such that its symbol is, locally, of the form
$$
\sigma(x,\xi) \sim \sum_{j=0}^o \,\sum_{\,-\infty <k}^{o'} \sigma_{j,k}(A)(x,\xi)\,\log(|\xi|)^{j} \; ,
$$  
in which $\sigma_{j,k}(A)(x,\xi)$ are classical symbols, see \cite[Section 2.6]{Scott}. Within this general 
framework we introduce zeta functions and traces. The map
$$
\zeta(A,Q,s) = s\in \mathbb{C} \mapsto \hbox{tr} \left( AQ^{-s} \right)\in \mathbb{C}\; ,
$$ 
in which $\tr$ is the classical trace of trace-class pseudo-differential operators, see 
\cite[Section 1.3.5.1]{Scott}, is well-defined for $Re(s)$ large enough, and it extends to a meromorphic function 
on $\C$ with possibly a pole at $s = 0$ \cite{PayBook,Scott}. When $A$ is classical, this pole is a simple pole, 
and when $A$ is classical, odd-class, and $M$ is odd dimensional, $\zeta(A,Q,s)$ has no pole at $s=0.$ 

Gilkey \cite[Section 1.12.2]{Gil} treats zeta functions and their relation to the heat kernel in detail; Scott 
\cite[Section 1.5.7]{Scott} deals with zeta functions in a very general setting: he extends the computations 
of Kontsevich and Vishik \cite{KV1}.

When $A$ is a classical pseudodifferential operator, the Wodzicki residue, $res_W$, appearing in \cite{W}, see 
also \cite{Ka}, is directly linked with the simple pole of  $\zeta(A,Q,.)$  at $0$  
by the residue formula 
\begin{equation} \label{alpha}
res_{s = 0}\zeta(A,Q,s) = (1/ q)\, \res_W A \; .
\end{equation}

The Wodzicki residue is a higher dimensional analog of the Adler trace on formal symbols introduced in 
\cite{Adl}, but we remark that the former fails to be a direct extension of the latter. For example, 
$(1 + \frac{d}{dx})$ is invertible in $ Cl (S^1,\C)$, and since $(1 + \frac{d}{dx})^{-1}$ is odd class, 
we have on one hand that $$res_W(1 + \frac{d}{dx})^{-1} = 0 \; ,$$
see {\em e.g.} \cite[Section 1.5.8.2]{Scott}, but on the other hand, the formal symbol of 
$(1 + \frac{d}{dx})^{-1}$ has a non-vanishing Adler trace.

Following \cite[Chapter 7]{PayBook} and \cite[Section 1.5.7]{Scott}, see also \cite{CDMP}, we define renormalized 
traces of classical pseudodifferential operators as follows:

\begin{Definition} \label{d6} 
Let $A$ be a log-polyhomogeneous pseudo-differential operator {and $Q$ a fixed weight of order $q$.} 
	The finite part of $\zeta(A,Q,s)$ at $s = 0$ is called the renormalized trace of $A$. We denote it by 
	$\tr^Q A$. If $A$ is a classical pseudodifferential operator, then
$${ \tr^Q A = lim_{s \rightarrow 0} \left(\hbox{tr} (AQ^{-s}) - \frac{1}{q\,s} \res_W (A)\right) .}$$
\end{Definition}

If $A$ is a trace-class { pseudodifferential operator} acting on  $L^2(M,E)$ then 
$\hbox{tr}^Q{(A)}=\hbox{tr}{(A)}$, see {\em e.g.} \cite{CDMP}. However, generally speaking, 
the linear functional $\hbox{tr}^Q$ {\em is not} a trace, this is, it does not vanish on commutators, 
although the linear map $\res_W$ determined by the Wodzicki residue does fulfil the trace property. 

\smallskip

We state the main properties of $res_W$ and of renormalized trace in Propositions \ref{p5} and \ref{p6}.

\begin{Proposition} \label{p5}
	\begin{item}
		(i) The Wodzicki residue $\res_W$ is a trace on the algebra of
		classical pseudodifferential operators $Cl(M,E)$, i.e. { $\forall
		A,B \in Cl(M,E), \res_W[A,B]=0.$ }
	\end{item}
	\begin{item}
		(ii) if $m =
		dim M$ and $A \in Cl(M,E)$,
		$$ { \res_W A }= \frac{1}{(2\pi)^n} \int_M \int_{|\xi|=1} tr \sigma_{-m}(x,\xi) d\xi dx $$
		where $\sigma_{-m}$ is the $(-m)$ positively homogeneous part of the
		symbol of A, see $(\ref{expansion})$. In particular, $\res_W$ does not depend on the choice
		of $Q$, in spite of what $(\ref{alpha})$ may suggest.
	\end{item}
	
\end{Proposition}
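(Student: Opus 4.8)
The plan is to prove the local symbol formula (ii) first, since both the trace property (i) and the independence from the weight $Q$ will follow from it. Write $m=\dim M$. For part (ii), I would start from the meromorphic continuation of $\zeta(A,Q,s)=\tr(AQ^{-s})$ together with the residue relation (\ref{alpha}). For $\Re(s)$ large enough $AQ^{-s}$ is trace-class, so working in a local chart with a trivialization of $E$ and a subordinate partition of unity, I would express the classical trace as the integral over the diagonal of the Schwartz kernel, hence as
$$\tr(AQ^{-s}) = \frac{1}{(2\pi)^m}\int_M \int_{T_x^*M} \tr\,\sigma(AQ^{-s})(x,\xi)\, d\xi\, dx\; .$$
The symbol $\sigma(AQ^{-s})$ is classical, and after subtracting its finitely many homogeneous components the remainder integrates to a function holomorphic near $s=0$; the pole therefore comes from the homogeneous pieces on $\{|\xi|\ge 1\}$. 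Passing to polar coordinates $\xi=r\omega$, a component homogeneous of degree $d$ contributes a radial factor $\int_1^\infty r^{d+m-1-qs}\,dr$, which develops a simple pole in $s$ only when $d=-m$ at $s=0$. Extracting $\int_1^\infty r^{-1-qs}\,dr = 1/(qs)$ and matching against (\ref{alpha}) identifies $\res_W A$ with the cosphere integral of the degree-$(-m)$ component. The decisive point is that at $s=0$ one has $Q^{-s}=\mathrm{Id}$, so this component is exactly $\sigma_{-m}(A)$, which carries no trace of $Q$; this both yields the formula of (ii) and makes the independence from the weight transparent.

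For part (i), I would feed the commutator $[A,B]$ into formula (ii) and compute the degree-$(-m)$ part of its symbol using the composition formula for classical symbols (the same expansion from \cite[Section 1.5.2]{Scott} used in the proof that $Cl_{odd}$ is an algebra). The order-zero term of the star product contributes $\sigma(A)\sigma(B)-\sigma(B)\sigma(A)$, whose fiberwise trace vanishes because $\tr$ is a trace on $\mathrm{End}(E_x)$. For the higher terms I would show that, after taking the fiberwise trace and restricting to homogeneity $-m$, each contribution is a finite sum of $\partial_\xi$- and $\partial_x$-derivatives, i.e. a divergence on $T^*M$ minus the zero section; integrating over the cosphere $\{|\xi|=1\}$ and over the closed manifold $M$ then annihilates these terms by Stokes' theorem. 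Combined with (ii), this gives $\res_W[A,B]=0$.

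The main obstacle will be the bookkeeping in part (i): verifying that the surviving symbol-composition terms really are total derivatives in phase space whose cosphere integrals vanish. This needs the explicit identity turning the $\tr(\partial_\xi^\alpha a\, D_x^\alpha b)$-type expressions into exact forms on the unit cosphere bundle, together with the cancellations obtained by antisymmetrizing in $A$ and $B$. A secondary technical point, the coordinate-invariance of the density $\int_{|\xi|=1}\tr\,\sigma_{-m}(A)\,d\xi$, I would sidestep by invoking the intrinsic characterization: the left-hand sides of (ii) and (\ref{alpha}) are manifestly chart-independent, so the locally computed right-hand side patches automatically to a globally defined object.
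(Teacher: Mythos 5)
The paper offers no proof of Proposition \ref{p5} to compare against: it is stated as a survey of known facts, with $\res_W$ introduced through the references \cite{W,Ka} and the zeta-function link (\ref{alpha}) quoted from the literature \cite{KV1,PayBook,Scott}. Your proposal therefore fills in a proof the paper delegates to its sources, and it follows exactly the classical route found there, in the same logical order (local formula first, trace property and $Q$-independence as corollaries). The skeleton is correct: for (ii), writing $\tr(AQ^{-s})$ as the integral of the diagonal kernel, splitting the symbol into homogeneous components plus a remainder of very negative order (whose trace is holomorphic near $s=0$), and integrating radially does isolate a simple pole at $s=0$ produced only by the component of homogeneity $-m$; holomorphy of the family $s\mapsto\sigma(AQ^{-s})$, with value $\sigma(A)$ at $s=0$, then identifies the residue with $\frac{1}{q}\,\frac{1}{(2\pi)^m}\int_M\int_{|\xi|=1}\tr\,\sigma_{-m}(A)\,d\xi\,dx$, which visibly contains no trace of $Q$. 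For (i), the composition expansion together with cyclicity of the fiberwise trace does reduce the degree-$(-m)$ part of $\sigma([A,B])$ to a sum of phase-space derivatives, and your appeal to the global characterization of the residue to handle coordinate-invariance is legitimate.

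One caution on the step you yourself flag as the main obstacle: ``Stokes' theorem'' is not quite the right invocation for the $\xi$-derivative terms, because the cosphere $\{|\xi|=1\}$ is a closed manifold and the restriction to it of $\partial_{\xi_j}c$ is not a boundary term. What is actually needed is the homogeneity lemma: if $f$ is positively homogeneous of degree $-(m-1)$ in $\xi$, then $\int_{|\xi|=1}\partial_{\xi_j}f\,d\sigma=0$. This follows by integrating $\partial_{\xi_j}f$ (homogeneous of degree $-m$) over an annulus $1\le|\xi|\le R$: homogeneity forces this integral to equal $(\log R)\int_{|\xi|=1}\partial_{\xi_j}f\,d\sigma$, while the divergence theorem plus the homogeneity of $f$ makes the two spherical boundary contributions cancel for every $R$, so the cosphere integral must vanish. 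With that lemma for the $\partial_\xi$-terms and integration over the closed manifold $M$ for the $\partial_x$-terms, your bookkeeping closes, and this is precisely how the sources cited by the paper complete the argument.
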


\begin{Proposition} \label{p6}
Let us fix a weight $Q$ of order $q$.   

	\begin{itemize}
		\item   Given two classical pseudo-differential operators A and B,
		\begin{equation}\label{crochet}  
		\tr^Q[A,B] = -\frac{1}{q} \res (A[B,\log Q]). 
		\end{equation}
		\item { 
		Let us consider a family $A_t$ of classical pseudo-differential operators of constant order, and  
		a family $Q_t$ of weights of constant order $q$, both of which are differentiable with respect to the 
		Kontsevich and Vishik Fr\'echet structure	on $Cl(M,E)$. } Then, 
		\begin{equation}\label{deriv} 
		\frac{d}{dt} \left(tr^{Q_t}A_t\right) = tr^{Q_t} \left(\frac{d}{dt} A_t\right) -
		\frac{1}{q} \res \left( A_t (\frac{d}{dt}\log Q_t) \right).
		\end{equation}
		\item If C is a classical elliptic
		injective operator or a diffeomorphism, and $A$ is a classical pseudodifferential operator,
		$tr^{C^{-1}QC}\left( C^{-1}AC \right)$ is well-defined and equals $\tr^QA$. 
		\item Finally, $$\tr^QA = \overline{\tr^{Q}A^*}.$$
	\end{itemize}
\end{Proposition}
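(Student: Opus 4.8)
The plan is to derive all four statements directly from the defining zeta function $\zeta(A,Q,s)=\tr(AQ^{-s})$, tracking its Laurent expansion at $s=0$ and exploiting two facts: that the classical trace $\tr$ is cyclic on trace-class operators (hence on $AQ^{-s}$ for $Re(s)$ large, where $Q^{-s}$ is smoothing enough to be trace-class), and the residue formula \eqref{alpha} relating the simple pole of $\zeta(\cdot,Q,s)$ at $s=0$ to the Wodzicki residue.

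For the commutator formula \eqref{crochet}, I would first use cyclicity for $Re(s)$ large to rewrite $\tr([A,B]Q^{-s})=\tr(ABQ^{-s})-\tr(AQ^{-s}B)=\tr\!\left(A[B,Q^{-s}]\right)$. The heart of the argument is then the expansion of $[B,Q^{-s}]$ near $s=0$: writing $Q^{-s}=e^{-s\log Q}$ and differentiating in $s$ gives $[B,Q^{-s}]=-s\,[B,\log Q]\,Q^{-s}+O(s^{2})$ in the appropriate symbolic sense. The structural fact I would isolate as the key lemma is that, although $\log Q$ is merely log-polyhomogeneous, the commutator $[B,\log Q]$ is a genuine classical pseudodifferential operator, so that $A[B,\log Q]$ is classical and $\res(A[B,\log Q])$ is defined. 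Consequently $\tr([A,B]Q^{-s})=-s\,\tr\!\left(A[B,\log Q]\,Q^{-s}\right)+(\text{holomorphic, vanishing at }s=0)$, and taking the finite part at $s=0$ the prefactor $-s$ turns the simple pole $\tfrac{1}{qs}\res(A[B,\log Q])$ of $\tr(A[B,\log Q]Q^{-s})$ (by \eqref{alpha}) into the constant $-\tfrac{1}{q}\res(A[B,\log Q])$, which is exactly \eqref{crochet}.

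For the derivative formula \eqref{deriv}, I would differentiate $\tr^{Q_t}A_t$ in $t$, separating the two dependencies. The linear dependence on $A_t$ contributes $\tr^{Q_t}(\dot A_t)$ at once. The dependence through $Q_t$ enters via $\frac{d}{dt}Q_t^{-s}$, which I would handle with Duhamel's formula for the derivative of $e^{-s\log Q_t}$; the variation $\frac{d}{dt}\log Q_t$ is again classical, and the same order-counting and finite-part computation as above produces the correction $-\tfrac{1}{q}\res\!\left(A_t\,\frac{d}{dt}\log Q_t\right)$. Here I expect the main obstacle: one must justify that the finite part at $s=0$ commutes with $\frac{d}{dt}$ and that the remainder terms are controlled uniformly in $s$ on a neighbourhood of $0$ with respect to the Kontsevich--Vishik Fr\'echet topology, so that differentiation under the meromorphically continued trace is legitimate.

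The last two assertions are more direct. For conjugation invariance, note that $(C^{-1}QC)^{-s}=C^{-1}Q^{-s}C$ and that $C^{-1}QC$, being similar to $Q$, has the same spectrum and an inherited spectral cut; hence its complex powers are defined and $\tr^{C^{-1}QC}$ makes sense. Cyclicity then gives, for $Re(s)$ large, $\zeta(C^{-1}AC,C^{-1}QC,s)=\tr(C^{-1}AQ^{-s}C)=\tr(AQ^{-s})=\zeta(A,Q,s)$; the two meromorphic continuations coincide, so their finite parts at $s=0$ agree, which is the claim. For the reality statement, self-adjointness of $Q$ gives $(Q^{-s})^{*}=Q^{-\bar s}$, whence by cyclicity $\zeta(A^{*},Q,s)=\tr(A^{*}Q^{-s})=\overline{\tr(Q^{-\bar s}A)}=\overline{\zeta(A,Q,\bar s)}$. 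Reading off the Laurent coefficients at $s=0$ (whose polar part also yields $\res A^{*}=\overline{\res A}$) and using that complex conjugation composed with $s\mapsto\bar s$ preserves the finite part, I obtain $\tr^{Q}A^{*}=\overline{\tr^{Q}A}$, equivalently $\tr^{Q}A=\overline{\tr^{Q}A^{*}}$.
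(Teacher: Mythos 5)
You should note at the outset that the paper does not actually prove this proposition: it is stated as background, with the proofs attributed to \cite{CDMP} (and to \cite{Ma2016} for the third item). So the comparison must be made against those sources, and measured against them your proposal follows essentially the same route: cyclicity of $\tr$ in the half-plane where $AQ^{-s}$ is trace-class, the structural lemma that $[B,\log Q]$ (and likewise $\frac{d}{dt}\log Q_t$) is a genuine \emph{classical} operator because the $q\log|\xi|$ part of the symbol of $\log Q$ has constant coefficient and cancels in commutators and in $t$-derivatives, and the mechanism whereby the prefactor $s$ converts the simple pole of \eqref{alpha} into the residue terms of \eqref{crochet} and \eqref{deriv}. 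Your fourth item is complete as written.

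Two steps need repair before this is a full proof. First, your remainder ``$O(s^{2})$'' in the expansion of $[B,Q^{-s}]$ is not automatically negligible: its Taylor coefficients involve powers of $\log Q$, and zeta functions of log-polyhomogeneous operators can have poles of order two, so $s^{2}\times(\hbox{trace of remainder})$ could a priori contribute a nonzero finite part at $s=0$. The fix used in \cite{CDMP}, following Kontsevich--Vishik, is to bypass the Taylor expansion altogether: $s\mapsto\frac{1}{s}[B,Q^{-s}]$ is a holomorphic family of \emph{classical} operators of affine order ${\rm ord}(B)-qs$ with value $-[B,\log Q]$ at $s=0$, and traces of such families have only \emph{simple} poles, with residue at $s=0$ equal to $\frac{1}{q}\res$ of the value there; this same formalism is what legitimizes exchanging $\frac{d}{dt}$ with the finite part in \eqref{deriv}, the obstacle you rightly flag but do not resolve. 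Second, in the third item ``classical elliptic injective'' does not mean invertible: such a $C$ is Fredholm and may have nontrivial cokernel, so $C^{-1}$ is only defined on the range of $C$, and the well-definedness of $\tr^{C^{-1}QC}\left(C^{-1}AC\right)$ is part of the assertion rather than a formality; this is precisely why the paper cites \cite{Ma2016} separately for that item, and your appeal to similarity of spectra plus cyclicity does not by itself settle it.
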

In this proposition we have followed \cite{CDMP}, and \cite{Ma2016} for the third point.

\smallskip

We have stated that $\tr^Q$ is not a true trace; however, the renormalized trace of the bracket satisfies
some interesting properties which we state following \cite{Ma2006}.

\begin{Definition} \label{d8}
	Let $E$ be a vector bundle over $M$ let $Q$ a weight and let $a \in \Z$. We define :
	$$ {\mathcal A}^Q_a=\{B \in Cl(M,E) : [B,\log Q] \in Cl^a(M,E)\}.$$
\end{Definition}

\begin{Theorem} \label{t1}
	\begin{item}
		(i) ${\mathcal A}^Q_a \cap Cl^0(M,E) $ is an subalgebra of $Cl(M,E)$
		with unit.
	\end{item}
	\begin{item}
		(ii) Let $B \in Ell^*(M,E)$, $B^{-1}{\mathcal A}^Q_aB = {\mathcal A}^{B^{-1}QB}_a.$
	\end{item}
	\begin{item}
		(iii) Let $A\in Cl^b(M,E)$, and $B \in {\mathcal A}^Q_{-b-2}$,
		then $\tr^Q[A,B]=0.$ As a consequence, 
		$$\forall (A,B) \in Cl^{-\infty}(M,E) \times Cl(S^1,V), \quad \tr^Q[A,B]=0\; .$$
	\end{item}
\end{Theorem}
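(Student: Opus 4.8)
The plan is to treat the three parts separately, each time reducing to the commutator Leibniz rule, the behaviour of the order filtration under composition, and the two facts recorded above: the bracket formula (\ref{crochet}) and the symbolic description of $\res_W$ in Proposition \ref{p5}(ii).

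For (i) I would first observe that $Id \in \mathcal{A}^Q_a \cap Cl^0(M,E)$, since $[Id,\log Q]=0$ lies in $Cl^a(M,E)$, so the unit is present; closure under sums and scalar multiples is immediate because $B \mapsto [B,\log Q]$ is linear and $Cl^0(M,E)$ is a vector space. For products I would use $[B_1B_2,\log Q]=B_1[B_2,\log Q]+[B_1,\log Q]B_2$: if $B_1,B_2 \in \mathcal{A}^Q_a \cap Cl^0(M,E)$ then each summand has order $\leq 0+a=a$, so $[B_1B_2,\log Q]\in Cl^a(M,E)$, while $B_1B_2 \in Cl^0(M,E)$ because zero-order classical operators form an algebra. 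Hence $\mathcal{A}^Q_a \cap Cl^0(M,E)$ is a unital subalgebra.

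For (ii) the crux is the conjugation identity $\log(B^{-1}QB)=B^{-1}(\log Q)B$ for $B \in Ell^*(M,E)$: from $(B^{-1}QB-\lambda\, Id)^{-1}=B^{-1}(Q-\lambda\, Id)^{-1}B$ the contour-integral definition of complex powers gives $(B^{-1}QB)^{-s}=B^{-1}Q^{-s}B$, and differentiating at $s=0$ yields the identity. Granting it, for $C \in \mathcal{A}^Q_a$ a direct computation gives $[B^{-1}CB,\log(B^{-1}QB)]=B^{-1}[C,\log Q]B$, which is classical of the same order as $[C,\log Q]$ because conjugation by an invertible classical operator preserves order; thus $B^{-1}CB \in \mathcal{A}^{B^{-1}QB}_a$, i.e. $B^{-1}\mathcal{A}^Q_a B \subseteq \mathcal{A}^{B^{-1}QB}_a$. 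Applying the same inclusion with the weight $B^{-1}QB$ and the operator $B^{-1}$ gives the reverse containment, hence equality. I expect the most delicate point here to be the justification of the logarithm identity, since conjugation by a non-unitary $B$ destroys the self-adjointness of $Q$ and one must check that $B^{-1}QB$ still admits a well-defined logarithm.

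For (iii) I would apply (\ref{crochet}): $\tr^Q[A,B]=-\frac{1}{q}\res_W(A[B,\log Q])$. The hypotheses $A \in Cl^b(M,E)$ and $B \in \mathcal{A}^Q_{-b-2}$ force $[B,\log Q]\in Cl^{-b-2}(M,E)$, so $A[B,\log Q]$ is classical of order at most $b+(-b-2)=-2$. Since $\res_W$ sees only the part of the symbol homogeneous of degree $-m=-\dim M$ (Proposition \ref{p5}(ii)), for $M=S^1$ (so $m=1$) an operator of order $\leq -2<-1$ has vanishing $\sigma_{-1}$, whence $\res_W(A[B,\log Q])=0$ and $\tr^Q[A,B]=0$. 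For the concluding consequence, with $A \in Cl^{-\infty}(M,E)$ and arbitrary $B \in Cl(S^1,V)$ of order $b'$, I would write $\tr^Q[A,B]=-\tr^Q[B,A]$ and note that $[A,\log Q]$ is smoothing, so $A \in \mathcal{A}^Q_{-b'-2}$ and the first part applies; equivalently $A[B,\log Q]$ is smoothing and its residue vanishes outright. The main obstacle in (iii) is the order bookkeeping against $\dim M$: the bound $-b-2$ produces order $\leq -2$, which lies strictly below $-m$ precisely when $m=1$, so that the statement is really tailored to curves such as $S^1$.
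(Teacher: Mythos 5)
There is no in-paper proof to compare against: Theorem \ref{t1} is quoted from \cite{Ma2006} and the authors give no argument, so your write-up has to be judged on its own merits. It is correct, and it is the standard argument: (i) is the Leibniz identity $[B_1B_2,\log Q]=B_1[B_2,\log Q]+[B_1,\log Q]B_2$ together with order bookkeeping (the zero-order hypothesis is exactly what keeps both summands in $Cl^a(M,E)$); (ii) follows from $(B^{-1}QB)^{-s}=B^{-1}Q^{-s}B$, hence $\log(B^{-1}QB)=B^{-1}(\log Q)B$, plus your two-sided inclusion trick; and (iii) is the intended combination of (\ref{crochet}) with the locality of the Wodzicki residue (Proposition \ref{p5}(ii)). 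The point you flag as delicate in (ii) --- that $B^{-1}QB$ is no longer self-adjoint, so one must check it still admits complex powers and a logarithm --- is genuine but is settled in \cite{CDMP} (spectrum and resolvent are preserved under conjugation); the paper itself presupposes this in the third item of Proposition \ref{p6}, where $\tr^{C^{-1}QC}$ is used.

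Your closing observation on (iii) is more than bookkeeping: the first claim of (iii) is actually \emph{false} for $\dim M\geq 2$, so your restriction to $m=1$ is forced, not a defect of your proof. For instance, on $\mathbb{T}_2$ with $Q=1+\Delta$, take $A=M_g(1+\Delta)^{-1}$ (so $b=-2$) and $B=M_f\,\partial_{x_1}$, where $M_f, M_g$ denote multiplication operators; then $[B,\log Q]=[M_f,\log Q]\,\partial_{x_1}\in Cl^0(\mathbb{T}_2,\C)$, so $B\in{\mathcal A}^Q_{-b-2}$, while $\sigma_{-2}\left(A[B,\log Q]\right)$ integrates over the cosphere bundle to a nonzero multiple of $\int_{\mathbb{T}_2} g\,\partial_{x_1}f\,dx$, which is nonzero for $f=\sin x_1$, $g=\cos x_1$; hence $\tr^Q[A,B]\neq 0$. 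The hypothesis $B\in{\mathcal A}^Q_{-b-2}$ is calibrated to $S^1$, as the $Cl(S^1,V)$ appearing in the stated ``consequence'' betrays (in \cite{Ma2006} the result is formulated on $S^1$); on a general $M$ one should require $B\in{\mathcal A}^Q_{-b-\dim M-1}$, and then your residue argument goes through verbatim.
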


We are ready to state the properties of $tr^Q$ that make odd-class pseudodifferential operators an interesting
arena for infinite-dimensional mechanics.

\begin{Theorem} \label{commutator}
	Let $A,B \in Cl(M,E)$ and let $Q$ be an odd-class weight of even order, e.g. $Q = \Delta.$
	\begin{itemize}
		\item  If $(A,B) \in Cl_{odd}(M,E) \times Cl_{odd}(M,E)$, and if $M$ is odd dimensional, 
		$$\tr^Q([A,B])=0\; .$$
		\item  If $(A,B) \in Cl_{even}(M,E)\times Cl_{odd}(M,E),$ and if $M$ is even dimensional dimensional, 
		$$\tr^Q([A,B])=0\; .$$
	\end{itemize}
\end{Theorem}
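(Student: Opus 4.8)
The plan is to derive Theorem \ref{commutator} from the key formula \eqref{crochet} of Proposition \ref{p6}, namely
\begin{equation*}
\tr^Q[A,B] = -\frac{1}{q}\,\res_W\!\left(A[B,\log Q]\right),
\end{equation*}
by controlling the parity (odd or even class) of the operator $A[B,\log Q]$ and then invoking the vanishing of the Wodzicki residue on the relevant class. First I would record the parity of $\log Q$: since $Q$ is an odd-class weight of even order $q$, its complex powers $Q^{-s}$ are odd class (differentiating the parity relation in $s$ preserves it), and so $\log Q = -\frac{d}{ds}\big|_{s=0}Q^{-s}$ is odd class as well. With $\log Q$ odd class, the composition rules summarized in the table at the end of Subsection 2.1 tell me exactly how parity propagates through the bracket $[B,\log Q] = B\log Q - \log Q\, B$ and then through $A[B,\log Q]$.

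For the first bullet, suppose $A,B\in Cl_{odd}(M,E)$. Then $B\log Q$ and $\log Q\, B$ are both odd class (odd $\circ$ odd $=$ odd), hence $[B,\log Q]$ is odd class, and therefore $A[B,\log Q]$ is again odd $\circ$ odd $=$ odd class. Now I would invoke the fact stated just after \eqref{expansion}-type discussion in Subsection 2.2: when $M$ is odd dimensional and the operator is odd class, the $(-m)$-homogeneous component of its symbol vanishes by a parity argument. Concretely, by Proposition \ref{p5}(ii) the Wodzicki residue is the integral over the cosphere bundle of $\tr\,\sigma_{-m}(x,\xi)$; for an odd-class operator $\sigma_{-m}(x,-\xi) = (-1)^{-m}\sigma_{-m}(x,\xi)$, and when $m$ is odd this sign is $-1$, so the integrand is antisymmetric under $\xi\mapsto -\xi$ and the integral over the symmetric domain $\{|\xi|=1\}$ vanishes. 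Hence $\res_W\!\left(A[B,\log Q]\right)=0$ and thus $\tr^Q[A,B]=0$.

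For the second bullet, suppose $A\in Cl_{even}(M,E)$ and $B\in Cl_{odd}(M,E)$, with $M$ even dimensional. Now $B\log Q$ and $\log Q\,B$ are odd $\circ$ odd $=$ odd class, so $[B,\log Q]$ is odd class, and $A[B,\log Q]$ is even $\circ$ odd $=$ even class. For an even-class operator the parity relation reads $\sigma_{-m}(x,-\xi)=(-1)^{-m+1}\sigma_{-m}(x,\xi)$; when $m$ is even this sign is again $-1$, so the same antisymmetry argument applied to the cosphere integral in Proposition \ref{p5}(ii) forces $\res_W\!\left(A[B,\log Q]\right)=0$, and \eqref{crochet} gives $\tr^Q[A,B]=0$.

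The crux of the argument is the parity bookkeeping that makes the dimension parity match the class parity so that the exponent of $(-1)$ in $\sigma_{-m}(x,-\xi)$ comes out to $-1$ in each case; once that is secured, vanishing is immediate from the explicit integral formula of Proposition \ref{p5}(ii). The main obstacle I anticipate is the careful verification that $\log Q$ is genuinely odd class and, more delicately, that $[B,\log Q]$ really lies in $Cl(M,E)$ (rather than being merely log-polyhomogeneous) so that Proposition \ref{p5}(ii) applies to $A[B,\log Q]$; this is where one must check that the logarithmic terms in the symbol of $\log Q$ cancel in the commutator, leaving a classical operator whose $(-m)$-homogeneous symbol component is well defined and subject to the parity constraint. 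I would handle this by recalling that $[\,\cdot\,,\log Q]$ maps $Cl(M,E)$ into $Cl(M,E)$ (the logarithmic term of $\log Q$ being a scalar multiple of the identity on the level of its top-order log-component, so that it drops out of the commutator), a standard feature of the Kontsevich--Vishik calculus underlying Proposition \ref{p6}.
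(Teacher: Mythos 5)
Your proposal is correct and follows essentially the same route as the paper's own (sketched) proof: both reduce the statement via the identity $\tr^Q[A,B]=-\frac{1}{q}\res_W\!\left(A[B,\log Q]\right)$, track the odd/even class of $[B,\log Q]$ and then of $A[B,\log Q]$, and conclude from the antisymmetry of $\sigma_{-m}$ under $\xi\mapsto-\xi$ in the local cosphere-integral formula for the Wodzicki residue. Your write-up merely makes explicit two points the paper leaves implicit, namely the dimension-parity sign bookkeeping and the fact that $[B,\log Q]$ is genuinely classical and odd class (which is exactly where the hypothesis that $Q$ is odd class of even order enters).
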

\begin{proof} 
The first item is due to Kontsevich and Vishik, see \cite{KV1,KV2}. We sketch a proof of this 
theorem following \cite{Scott}:

If $Q$ and $B$ are odd class, with $Q$ of even order, as in the statement of the theorem, 
$[B,\log Q] \in Cl_{odd}(M,E).$ Thus, 
\begin{itemize}
	\item  If $A \in Cl_{odd}(M,E),$ then $A[B, \log Q] \in Cl_{odd}(M,E).$
	\item  If $A \in Cl_{even}(M,E),$ then $A[B, \log Q] \in Cl_{even}(M,E).$
\end{itemize}
Symmetry properties show that in both cases $$\int_{|\xi|=1} \sigma_{-m}\left(A[B, \log Q]\right) =0\; ,$$
and the result follows by applying the local formula for the Wodzicki residue. 
\end{proof}

\begin{Corollary}  \label{adinvariance}
{ Let $Q = f(\Delta)$ in which $f$ is any analytic function such that $Q$ is a weight,} 
and assume that $A,B$ and $C$ are classical pseudodifferential operators either in the odd-class 
or in the even-class. If the product $ABC$ is odd class and $M$ is odd dimensional, or if the product $ABC$ is 
even class and $M$ is even dimensional, then 
$$ tr^{ Q}(ABC) = tr^{ Q}(CAB) = tr^{ Q} (BCA)\; .$$
\end{Corollary}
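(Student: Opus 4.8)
The plan is to reduce the cyclicity statement for $\tr^Q$ to the vanishing-of-commutator result established in Theorem \ref{commutator}. The key observation is that the three equalities $\tr^Q(ABC)=\tr^Q(CAB)=\tr^Q(BCA)$ are each equivalent to the vanishing of a renormalized trace of a commutator. Indeed, $\tr^Q(ABC)-\tr^Q(BCA)=\tr^Q(ABC-BCA)=\tr^Q([A,BC])$ by linearity of $\tr^Q$, and similarly $\tr^Q(BCA)-\tr^Q(CAB)=\tr^Q([BC,A])\cdot(-1)$ rewritten, or more symmetrically $\tr^Q(CAB)-\tr^Q(ABC)=\tr^Q([C,AB])$. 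So it suffices to show that each of these commutators $[A,BC]$, $[AB,C]$, etc., has vanishing renormalized trace under the stated parity hypotheses.

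**Next I would** verify that the parity bookkeeping lines up so that Theorem \ref{commutator} applies. The hypothesis is that each of $A,B,C$ is individually either odd class or even class, and that the product $ABC$ has a definite parity matched to the dimension of $M$. Using the composition table displayed just before Subsection \ref{s3} — odd$\,\circ\,$odd$\,=\,$odd, odd$\,\circ\,$even$\,=\,$even, even$\,\circ\,$even$\,=\,$odd — the parity of any product of the three operators is determined by the parities of the factors, and in particular $BC$, $AB$, and $CA$ each have a well-defined parity. To apply the first bullet of Theorem \ref{commutator} (odd-class pair, $M$ odd dimensional) to $\tr^Q[A,BC]$, I need both $A$ and $BC$ to be odd class; to apply the second bullet (even$\,\times\,$odd pair, $M$ even dimensional) I need one factor even and one odd. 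The crucial point is that the hypothesis ``$ABC$ odd class and $M$ odd'' (resp. ``$ABC$ even class and $M$ even'') forces exactly the parity configuration that makes the relevant pair fall into one of the two admissible cases of Theorem \ref{commutator}.

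**The main obstacle** I anticipate is a clean case analysis showing this parity matching is automatic rather than requiring extra assumptions. One must check the finitely many parity assignments to $(A,B,C)$ compatible with $ABC$ being (say) odd class, and confirm that in each such case the pair $(A,BC)$ — and by symmetry $(B,CA)$ and $(C,AB)$ — lands in either the odd/odd slot with $M$ odd or the even/odd slot with $M$ even. Since $Q=f(\Delta)$ is odd class of even order (the Laplacian is an even-order differential operator, hence odd class, and an analytic function of it preserves this), $Q$ satisfies the hypotheses of Theorem \ref{commutator}, so no difficulty arises from the weight. I would organize the argument as: (1) record that $Q=f(\Delta)$ is an odd-class weight of even order; (2) reduce each of the two desired equalities to $\tr^Q$ of a commutator of the form $[\,\cdot\,,\,\cdot\,]$ via linearity; (3) dispatch the parity case analysis to confirm the commutators satisfy the hypotheses of Theorem \ref{commutator}; (4) conclude vanishing, hence the three quantities coincide. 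The one genuinely delicate step is (3), but it is a finite and mechanical verification once the composition table is in hand.
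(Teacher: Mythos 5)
Your reduction of the cyclicity statement to renormalized traces of commutators is exactly the intended route: $\tr^Q(ABC)-\tr^Q(BCA)=\tr^Q([A,BC])$ and $\tr^Q(ABC)-\tr^Q(CAB)=\tr^Q([AB,C])$, so steps (1), (2) and (4) of your plan are fine. The problem is step (3): your claim that the hypothesis ``$ABC$ odd class and $M$ odd dimensional'' forces each relevant pair into one of the two admissible slots of Theorem \ref{commutator} is false. Take $M$ odd dimensional, $A$ odd class and $B,C$ both even class. Then $ABC$ is odd class (odd $\cdot$ even $\cdot$ even $=$ odd by the composition table), so the Corollary's hypothesis holds, and $[A,BC]$ is indeed a commutator of two odd-class operators; but $AB$ and $C$ are both even class, and so are $CA$ and $B$, so the second equality requires the renormalized trace of an \emph{even/even} commutator to vanish on an odd-dimensional manifold --- a case that Theorem \ref{commutator} does not state. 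The same failure occurs for the configurations ($A$ even, $B$ odd, $C$ even) and ($A$ even, $B$ even, $C$ odd): for $M$ odd dimensional, whenever exactly two of the three operators are even class, at least one of the two needed commutators falls outside the theorem as stated. (For $M$ even dimensional your matching does go through in all four parity configurations, using antisymmetry of the bracket to turn odd/even pairs into even/odd ones.)

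The missing idea is that the vanishing criterion must be formulated in terms of the parity of the \emph{product}, not of the individual factors, and this requires reopening the proof of Theorem \ref{commutator} rather than citing its statement. By Equation (\ref{crochet}), $\tr^Q[U,V]=-\frac{1}{q}\res(U[V,\log Q])$, and since $Q=f(\Delta)$ is odd class of even order, $[V,\log Q]$ is a classical operator with the \emph{same} parity as $V$ (this is the natural extension of the line ``$[B,\log Q]\in Cl_{odd}(M,E)$'' in the paper's proof of Theorem \ref{commutator} to even-class $V$). Hence $U[V,\log Q]$ has the parity of $UV$, and the local formula for the Wodzicki residue kills it whenever $UV$ is odd class and $\dim M$ is odd, or $UV$ is even class and $\dim M$ is even, because the cosphere integral of $\sigma_{-\dim M}$ vanishes by the symmetry $\xi\mapsto -\xi$. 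With this product-parity version of the commutator lemma --- which does cover the even/even, $M$ odd case --- your reduction closes immediately, since each of $[A,BC]$ and $[AB,C]$ has product parity equal to that of $ABC$, which is exactly what the Corollary's hypothesis controls. So your architecture is correct, but as written step (3) would fail, and the repair needs this strengthened lemma, not the finite case check against Theorem \ref{commutator} as stated.
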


\section{Renormalized traces determine non-degenerate pairings}

In this short but crucial section we give an extension of a result from \cite[Section 3.2]{Ma2020-4} which 
connects the foregoing discussion with Hermitian geometry. We remark that we do not assume that $M$ is 
an odd dimensional manifold, so that $tr^Q$ is not a priori a true trace.

\begin{Theorem} \label{Th:Hermitian}
We consider a weight $Q$ and a fixed classical pseudodifferential operator $Q_0 \in Cl(M,E).$

	\begin{enumerate}
		\item The sesquilinear map
		$$(.,.)_{Q,Q_0} :  (A,B) \in Cl(M,E) \times Cl(M,E) \mapsto \tr^Q\left(AQ_0B^*\right)$$
		is non-degenerate if and only if $Q_0$ is injective. 
		\item Moreover, if $Q_0$ is self-adjoint, then  $(.,.)_{Q,Q_0}$ is Hermitian, this is,
		$$  (B,A)_{Q,Q_0} = \overline{(A,B)}_{Q,Q_0} \; .$$
	\item
	As a consequence, the Hilbert-Schmidt positive definite Hermitian product 
	$$ \left( A,B\right)_{HS} = \tr\left(AB^*\right)$$
	which determines a positive definite metric on $Cl^{-1- dim M}(M,E)$,  extends to a Hermitian
	 form
	$$(\cdot\, , \cdot )_\Delta = (\cdot\, , \cdot )_{\Delta,Id}$$ 
	\begin{itemize}
		\item which is a non-degenerate form on $Cl(M,E)$ 
		\item whose real part defines a $(\R-)$ bilinear, symmetric non-degenerate form on $Cl(M,E)$. 
	\end{itemize}
\end{enumerate}
The same properties hold true if we replace $Cl(M,E)$ by $Cl^0(M,E)$ in statements $(1)$, $(2)$, $(3)$.
\end{Theorem}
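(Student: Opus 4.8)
The plan is to reduce every assertion to the single decisive fact that $\tr^Q$ agrees with the ordinary trace $\tr$ on trace-class operators (stated right after Definition \ref{d6}), together with the observation that finite-rank, and indeed all smoothing, operators lie in $Cl^{-\infty}(M,E)\subset Cl(M,E)$ and are trace-class. The workhorse will be a rank-one testing computation: for smooth sections $u,v$ of $E$, writing $u\otimes\bar v$ for the rank-one operator $w\mapsto \langle w,v\rangle u$, one has $\tr\big(T(u\otimes\bar v)\big)=\langle Tu,v\rangle$ for any continuous $T$. Since all operators appearing will be smoothing as soon as one slot is rank-one, each value $(A,B)_{Q,Q_0}=\tr^Q(AQ_0B^*)$ tested against such operators collapses from $\tr^Q$ to $\tr$ and becomes an honest inner-product expression.

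For (1) I would analyse the radical slot by slot. Testing $(A,B)_{Q,Q_0}$ against all rank-one $A=u\otimes\bar p$ reduces the condition ``$(A,B)_{Q,Q_0}=0$ for all $A$'' to $Q_0B^*=0$; this forces $B^*=0$, i.e. $B=0$, exactly when $Q_0$ is injective, giving non-degeneracy in that slot iff $Q_0$ is injective. The ``only if'' direction is the same computation run backwards: if $Q_0v=0$ with $v\neq 0$, then every $B=w\otimes\bar v$ lies in the radical and is a genuine (smoothing) element of $Cl(M,E)$, exhibiting degeneracy. Symmetrically, testing against rank-one $B$ turns the companion radical into the condition $AQ_0=0$, i.e. $A$ vanishing on the range of $Q_0$. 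Here lies the one delicate point, which I expect to be the main obstacle: $AQ_0=0$ forces $A=0$ precisely when $Q_0$ has dense range, equivalently when $Q_0^*$ is injective, and for a general non-self-adjoint classical operator injectivity of $Q_0$ does not by itself secure this. I would therefore read the clean equivalence ``non-degenerate $\iff Q_0$ injective'' through the antilinear slot governed by $Q_0$, make the dichotomy with the $Q_0^*$-governed slot explicit, and close the latter (when $Q_0^*$ is injective) using continuity of $A$ together with density of the range; the two slots coincide exactly when $Q_0$ is self-adjoint, which is the case relevant to (2) and (3).

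Part (2) is then purely formal. Using $\overline{\tr^Q C}=\tr^Q(C^*)$ from Proposition \ref{p6}, I compute $\overline{(A,B)_{Q,Q_0}}=\tr^Q\big((AQ_0B^*)^*\big)=\tr^Q(BQ_0^*A^*)$, which equals $\tr^Q(BQ_0A^*)=(B,A)_{Q,Q_0}$ as soon as $Q_0=Q_0^*$.

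For (3) I specialise to $Q_0=\mathrm{Id}$ (injective and self-adjoint) and $Q=\Delta$, so that (1) and (2) immediately yield that $(\cdot,\cdot)_\Delta$ is a non-degenerate Hermitian form on $Cl(M,E)$. To see it extends the Hilbert--Schmidt product, I note that for $A,B\in Cl^{-1-\dim M}(M,E)$ the product $AB^*$ has order $<-\dim M$, hence is trace-class, whence $(A,B)_\Delta=\tr^\Delta(AB^*)=\tr(AB^*)=(A,B)_{HS}$. For the real part, symmetry is immediate from the Hermitian identity since $\mathrm{Re}\,\overline{z}=\mathrm{Re}\,z$, and non-degeneracy follows from the complex case by the standard real-scalar trick: if $\mathrm{Re}(A,B)_\Delta=0$ for all $B$, then replacing $B$ by $iB$ recovers $\mathrm{Im}(A,B)_\Delta$, so $(A,B)_\Delta=0$ for all $B$ and hence $A=0$. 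Finally, the $Cl^0(M,E)$ versions require no new idea: every test operator used above lies in $Cl^{-\infty}(M,E)\subset Cl^0(M,E)$, and every manipulation (adjoints, the Hermitian identity of Proposition \ref{p6}, the trace-class collapse) is insensitive to order, so the same proofs apply verbatim with the domain restricted to $Cl^0(M,E)$.
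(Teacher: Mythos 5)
Your proposal is correct, and it follows a genuinely different, more elementary route than the paper's. For the decisive item (1), the paper also produces a rank-one smoothing witness (it takes $B=AQ_0p_x$, where $u=AQ_0x\neq 0$), but instead of invoking the compatibility of $\tr^Q$ with $\tr$ on trace-class operators it justifies the evaluation analytically: it studies $\phi(s)=\tr(AQ_0B^*Q^{-s})$ for $\mathfrak{Re}(s)$ large, rewrites it by cyclicity of the trace on products of Hilbert--Schmidt operators as $(Q^{-s/2}u,Q^{-s/2}u)_{L^2}$, and then evaluates the meromorphic continuation at $s=0$ to obtain $\|u\|_{L^2}^2\neq 0$. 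Your trace-class collapse ($AQ_0B^*$ is smoothing once one factor is rank one with smooth kernel, and $\tr^Q=\tr$ on trace-class operators, as stated after Definition \ref{d6}) replaces that whole zeta-function computation by the identity $\tr\left(T(u\otimes\bar v)\right)=\langle Tu,v\rangle$, and it identifies both radicals exactly: $\{A:\ AQ_0=0\}$ and $\{B:\ Q_0B^*=0\}$. This slot-by-slot bookkeeping also buys you two corrections that the paper's argument glosses over. First, in the degeneracy direction the paper takes $A=p_y$ with $y\in\ker Q_0$ and asserts $AQ_0=0$; in fact $p_yQ_0=0$ only when $y\perp\mathrm{ran}(Q_0)$, while what is true is $Q_0p_y=0$, i.e.\ the degeneracy lives in the antilinear slot --- exactly your $B=w\otimes\bar v$ construction. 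Second, the paper's claim that $Q_0$ injective implies $AQ_0\neq 0$ for all $A\neq 0$ is, as you suspected, not automatic: it requires $\mathrm{ran}(Q_0)$ to be dense (equivalently $Q_0^*$ injective), which can fail for an injective non-self-adjoint $Q_0$ --- for instance an injective smoothing operator on $S^1$ whose range lies in the span of the even Fourier modes, so that $(e_1\otimes\bar e_1)\,Q_0=0$. Hence the literal ``if and only if'' is valid for the slot governed by $Q_0$, and your explicit dichotomy, together with the remark that the two slots coincide for self-adjoint $Q_0$ (the only case used in the rest of the paper), is the right repair rather than pedantry. Item (2) and the $Cl^0(M,E)$ statement are proved exactly as in the paper; for item (3) you are in fact more explicit than the paper, which merely says it ``follows from the two previous items'', whereas you verify the Hilbert--Schmidt collapse on $Cl^{-1-\dim M}(M,E)$ and give the $B\mapsto iB$ argument for non-degeneracy of the real part.
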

\begin{proof}
	\begin{enumerate}
		\item 	First, let us assume that $Q_0$ is not injective. Let $y \neq 0 \in Ker Q_0$ and let $A= p_y$ 
		be $L^2$ orthogonal projection on the 1-dimensional vector space spanned by $y.$ Then $AQ_0=0$, and  
		$\forall B \in Cl(M,V)$ we have $(A,B)_{Q,Q_0} = 0$, so that $(\cdot\, , \cdot )_{Q,Q_0}$ is
		degenerate. 
		
		Let us now assume that $Q_0$ is injective. Then, $\forall A \neq 0 \in Cl(M,E)$, $AQ_0 \neq 0.$ 
		The formula $(A,B)_{Q,Q_0}=\tr^Q\left(A Q_0 B^*\right)$ certainly defines a sesquilinear form, 
		let us  prove that it is non-degenerate. Let $A \in Cl(M,E),$ and let 
		$u \in C^\infty(M,E) \cap \left(Im AQ_0 - \{0\} \right)$. We assume that $u$ is the image of a 
		function $x$ such that $||x||_{L^2} = 1, $ and we let $p_x$ be the $L^2-$ orthogonal projection on the 
		$\mathbb{C}-$vector space spanned by $x.$ Finally, we also let $(e_k)_{k \in \N}$ be an orthonormal base 
		with $e_0=x.$ We will analyse $\phi(s)=tr(AQ_0B^*Q^{-s})$ when $s$ is ``large enough'', 
		and then we will pass to the meromorphic continuation. For 
		$\mathfrak{Re}(s) \geq {(2ord(A) + 2 ord Q_0 + 1 + dim M)}{/q},$ we observe that the operators 
		$$AQ_0 \left(AQ_0p_x\right)^*Q^{-s}\; ,$$ 
	    $$AQ_0 \left(AQ_0p_x\right)^*Q^{-s/2}\; ,$$ 
	    $$Q^{-s/2}\; , $$ 
		$$\left(AQ_0p_x\right)^*Q^{-s/2}\; ,$$ and 
		$$Q^{-s/2}AQ_0$$ 
		are Hilbert-Schmidt class. We recall that for 
		Hilbert-Schmidt class operators $U$ and $V,$ $UV$ is trace class and $tr(UV) = tr(VU).$ Thus, applying
		commutation relations of the usual trace of trace-class operators, we obtain the following: 
		\begin{eqnarray*} 
		\phi(s) =\tr\left(AQ_0 \left(AQ_0p_x\right)^*Q^{-s}\right)
		   & = & \tr\left(Q^{-s/2}AQ_0 \left(AQ_0p_x\right)^*Q^{-s/2}\right) \\ 
		   & =& \tr\left( \left(AQ_0p_x\right)^*Q^{-s/2}.Q^{-s/2}AQ_0\right) \\ 
		   & = &\tr\left( \left(AQ_0p_x\right)^*Q^{-s}AQ_0\right)\; .
		   \end{eqnarray*}
		   Now we simplify this expression in order to show that the meromorphic continuation  of 
		   $\phi(s)$ has no poles and a non-zero value at $s=0$. The meromorphic continuation to $\mathbb{C}$ of 
		$s \mapsto \tr\left( \left(AQ_0p_x\right)^*Q^{-s}AQ_0\right)$ exists and it coincides with the 
		meromorphic continuation of $s \mapsto \phi(s)$; in 
		particular they coincide at $s=0.$ Moreover,
		\begin{eqnarray*}
			\tr\left( \left(AQ_0p_x\right)^*Q^{-s} AQ_0\right) 
			& = & \sum_{k \in \mathbb{N}}  (\left(AQ_0p_x\right)^*Q^{-s} AQ_0 e_k, e_k)_{L^2} \\
			& = &  \sum_{k \in \mathbb{N}}  (Q^{-s} AQ_0 e_k, AQ_0p_x e_k)_{L^2}\\
			& = & (Q^{-s} AQ_0 x, AQ_0x)_{L^2}\\
			& = & (Q^{-s/2}u,Q^{-s/2}u)_{L^2} \; .
		\end{eqnarray*}
		Since $\lim_{s \rightarrow 0} Q^{-s/2} = Id$ for weak convergence, the limit of the last term is 
		$||u||_{L^2}^2\neq 0.$
		The operator $AQ_0p_x$ is a smoothing (rank 1) operator and hence it belongs to $Cl(M,E),$ 
		which ends the proof.
		
	\item Let $(A,B)\in Cl(M,E). $ We calculate directly using Proposition \ref{p6}:  
	$$ (B,A)_{Q,Q_0}=\tr^Q(BQ_0A^*) = \tr^Q\left((AQ_0B^*)^*\right) 
	= \overline{\tr^Q\left(AQ_0B^*\right)} = \overline{(A,B)}_{Q,Q_0} \; .$$
	
	\item  It follows from the two previous items.
\end{enumerate}
We finish the proof by remarking that our foregoing arguments hold true when considering only bounded classical 
pseudodfferential operators. 
\end{proof}

\begin{remark} \label{nopos}
	We remark that if $Q_0$ is self-adjoint {and injective}, the polarization identity 
$$ \mathfrak{Re}(A,B)_{Q,Q_0} = \frac{1}{4}\left[ (A+B,A+B)_{Q,Q_0} - (A-B,A-B)_{Q,Q_0} \right] $$
	implies that $\mathfrak{Re}(.,.)_{Q,Q_0}$ 
	is a symmetric and non-degenerate { real-valued} bilinear form. This bilinear form is not positive-
	definite, see a direct calculation for $M = S^1$ in \cite[Section 3]{Ma2020-4}.
\end{remark}

\section{Rigid body equations}

In this and the next section we work with the regular Fr\'echet Lie group of odd-class pseudodifferential
operators $Cl_{odd}^{0,*}(M,E)$ and its Lie algebra $Cl_{odd}^{0}(M,E)$. Our main claim is that this Lie group is a 
non-trivial differential geometric framework on which we can pose equations of mechanics in the spirit of Arnold, 
see \cite{fourier}. Our main references for this section are \cite{GPoR,Holm,MR2016} and \cite{T}. 

We remark that in Subsection 4.2 (and also in Section 5) we consider pseudo-Riemannian metrics on 
$Cl_{odd}^{0,*}(M,E)$ induced by twisting the non-degenerate bilinear forms constructed in Section 3. Our metrics
are defined using {\em right} translations, see e.g. \cite{Holm}. This convention forces us to re-define the $ad_X$
morphism on $Cl_{odd}^{0}(M,E)$ as $ad_X Y = - [X,Y] = -(XY-YX) = YX-XY = [Y,X] .$

 \subsection{The Hamiltonian construction}
We consider the trace $tr^\Delta$ on the regular Lie algebra $Cl_{odd}^{0}(M,E)$ and the pairing {
$$
\left< A , B \right> = (A,B)_{\Delta,Q_0} = tr^\Delta (A\,Q_0\,B^*) \; ,
$$}
in which $Q_0$ is injective (and hence the pairing is non-degenerate) and self-adjoint (and hence the pairing is 
Hermitian), and we also consider its real part $\mathfrak{Re}\left< A , B \right>.$ 
 {\em We also assume, here and hereafter, that the following constraints on $Q_0$ hold}:
	
	\smallskip
	
	\begin{tabular}{|c|c|}
		\hline
		if $M$ is... & then $Q_0$ is... \\
		\hline
		odd dimensional & an odd-class operator\\
		\hline
		even dimensional &  an even-class operator \\
		\hline
	\end{tabular}

\smallskip

\noindent In this way we are sure that the commutation relations for $\tr^Q(AQ_0B^*)$ appearing in Theorem \ref{commutator} and Corollary \ref{adinvariance} hold. Trivially, if $Q_0$ is injective, self-adjoint and smoothing (e.g. $Q_0 = e^{-\Delta}$), these conditions are fulfilled for any manifold $M.$

The next lemma is a direct 
consequence of Theorem \ref{Th:Hermitian}:


\begin{Lemma}
{ Let us assume that $Q_0$ is an injective and self-adjoint classical pseudodifferential operator. }

\begin{enumerate}
	\item The $\mathbb{C}$-valued pairing { $\left< A , B \right> =  tr^\Delta (A\,Q_0\,B^*)$ on
	$Cl^0_{odd}(M,E)$ is Hermitian} and non-degenerate.
	\item The real-valued pairing $\,\mathfrak{Re}\left< A , B \right>$ is bilinear, symmetric and non-degenerate 
	for any choice 
	of self-adjoint and injective operator { $Q_0 \in Cl(M,E).$}
\end{enumerate}
\end{Lemma}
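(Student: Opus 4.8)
The plan is to derive this lemma directly from Theorem \ref{Th:Hermitian}, since both statements concern exactly the sesquilinear form $(A,B)_{Q,Q_0} = \tr^Q(AQ_0B^*)$ with the particular choice $Q = \Delta$. The only subtlety is that Theorem \ref{Th:Hermitian} is stated on $Cl(M,E)$ and on $Cl^0(M,E)$, whereas here we restrict to the odd-class subalgebra $Cl^0_{odd}(M,E)$; so the work consists in checking that nothing breaks upon this restriction and that the pairing remains well-defined and non-degenerate on the smaller space.

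For part $(1)$, I would first invoke part $(2)$ of Theorem \ref{Th:Hermitian}: since $Q_0$ is assumed self-adjoint, the form $(\cdot,\cdot)_{\Delta,Q_0}$ is Hermitian, so $(B,A) = \overline{(A,B)}$. This is the direct computation already carried out using the last property of Proposition \ref{p6}, namely $\tr^Q A = \overline{\tr^Q A^*}$, and it holds verbatim when $A,B \in Cl^0_{odd}(M,E)$ because $AQ_0B^*$ is still a classical (log-polyhomogeneous) operator to which $\tr^\Delta$ applies. For non-degeneracy on $Cl^0_{odd}(M,E)$, the hypothesis that $Q_0$ is injective lets me reuse the argument of part $(1)$ of Theorem \ref{Th:Hermitian}: given $0 \neq A \in Cl^0_{odd}(M,E)$, one produces a rank-one $B$ (built from the orthogonal projection $p_x$ onto a suitable $x$ with $AQ_0 x \neq 0$) so that $(A,B)_{\Delta,Q_0} \neq 0$; the final remark in the proof of Theorem \ref{Th:Hermitian} that the arguments persist for bounded operators is exactly what guarantees the conclusion survives the restriction to order-zero operators. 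Part $(2)$ then follows by taking real parts, exactly as in Remark \ref{nopos}: the polarization identity shows $\mathfrak{Re}(\cdot,\cdot)_{\Delta,Q_0}$ is an $\R$-bilinear, symmetric, non-degenerate form whenever $Q_0$ is self-adjoint and injective.

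The step I expect to require the most care is confirming that the test element $B$ produced in the non-degeneracy argument can be chosen inside $Cl^0_{odd}(M,E)$ rather than merely inside $Cl(M,E)$. In the proof of Theorem \ref{Th:Hermitian} the witness is a smoothing rank-one operator $AQ_0 p_x$, which is smoothing and hence trivially both odd-class and of order zero; so this causes no difficulty, and one only needs to observe that smoothing operators lie in $Cl^0_{odd}(M,E)$. The remaining verification---that the full chain of Hilbert--Schmidt and trace manipulations giving $\phi(s) = (Q^{-s/2}u, Q^{-s/2}u)_{L^2} \to \|u\|_{L^2}^2 \neq 0$ is unaffected by demanding $A$ have order zero---is immediate, since the meromorphic-continuation estimates in that proof are order-dependent only through the real part of $s$ at which the operators become Hilbert--Schmidt, and lowering the order of $A$ only makes those estimates easier. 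Hence the lemma is genuinely a corollary, and the entire proof can be phrased as an appeal to Theorem \ref{Th:Hermitian} together with the observation that $Cl^0_{odd}(M,E) \subset Cl(M,E)$ contains the rank-one witnesses needed for non-degeneracy.
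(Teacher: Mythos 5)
Your proposal is correct and matches the paper's own treatment: the paper offers no separate argument, simply declaring the lemma ``a direct consequence of Theorem \ref{Th:Hermitian}'', and your derivation is precisely the justification of that claim. Your extra care in checking that the rank-one smoothing witness $AQ_0p_x$ lies in $Cl^0_{odd}(M,E)$ (smoothing operators being trivially odd-class and of order zero) is exactly the point that makes the restriction to the odd-class subalgebra harmless.
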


\smallskip

This lemma allows us to consider the {\em regular dual space} of $Cl_{odd}^{0}(M,E)$, namely,
$$
Cl_{odd}^{0}(M,E)' = \left\{ \mu \in L (Cl_{odd}^{0}(M,E) , \mathbb{C} ) : \mu = \left< A , \cdot \right>
\mbox{ for some } A \in Cl_{odd}^{0}(M,E) \right\}\; .
$$
We can equip $Cl_{odd}^{0}(M,E)'$ with a Fr\'echet structure simply by transferring the
structure of $Cl_{odd}^{0}(M,E)$, since there is a bijection between $Cl_{odd}^{0}(M,E)'$ and 
$Cl_{odd}^{0}(M,E)$.

We consider smooth polynomial functions on $Cl_{odd}^{0}(M,E)'$ of the form
\begin{equation} \label{Pol}
f(\mu) = \sum_{k=0}^n a_k\, tr^\Delta(P^k)\; ,
\end{equation}
in which $a_k \in \mathbb{C}$ and $P$ is determined by the equation $\mu = \left< P , \cdot \right>$. 

If $f$ is such a smooth function on $Cl_{odd}^{0}(M,E)'$, we define the functional derivative of $f$, 
$\delta f/\delta \mu \in Cl^0_{odd}(M,E)$, as in (\ref{gradiente}), that is, via the equation
$$
\left< \nu\, , \frac{\delta f}{\delta \mu} \right>  = (d \, f)_\mu (\nu) = \left.\frac{d}{d\epsilon}
\right|_{\epsilon=0} f(\mu + \epsilon \nu) \; ,
$$
and we equip $Cl_{odd}^{0}(M,E)'$ with a Poisson bracket (which acts on polynomial
functions), see Equation (\ref{lie-poisson0}), \cite{BP}, and also \cite{ER2013,MR2016}. We set:
{
    \begin{equation}
    \{f \, , \, g \}(\mu) = +  \left< \mu \, \; , \; \left[
    \frac{\delta \, f}{\delta \, \mu} \, ,
    \, \frac{\delta \, g}{\delta \, \mu} \right] \right>   \label{lie-poisson}
    \end{equation}
for smooth functions $f, g : Cl_{odd}^{0}(M,E)' \rightarrow \mathbb{C}$ and $\mu \in Cl_{odd}^{0}(M,E)'$.
The plus sign is due to our right translation convention, see \cite[Remark 9.12]{Holm}. 
Next, let us fix a smooth function $H : Cl_{odd}^{0}(M,E)' \rightarrow \mathbb{C}$. The bracket 
(\ref{lie-poisson}) determines a derivation $X_H$ on functions $f : Cl_{odd}^{0}(M,E)' \rightarrow \mathbb{C}$ via
a prescription as in (\ref{vector}), this is, $X_H (\mu) \cdot f = \{ H , f \}(\mu)$ 
for all $\mu \in Cl_{odd}^{0}(M,E)'$; we can then pose Hamilton's equations
    \begin{equation} \label{he}
    \frac{d }{dt} (f \circ \mu) = X_H(\mu) \cdot f 
    \end{equation}
on  $Cl_{odd}^{0}(M,E)'$. For $\mu(t) = \left< P(t) , \cdot \right> \in Cl_{odd}^{0}(M,E)'$
   they become
    $$
  \left<  \frac{d \mu}{d t} , \frac{\delta \, f}{\delta \, \mu} \right> = + 
  \left< \mu\, \; , \; \left[\frac{\delta\, H}{\delta\, \mu}\, , \frac{\delta\, f}{\delta\, \mu}\right]\right>\; ,
    $$
    this is,
\begin{equation} \label{lax1}
     \left<  \frac{d P}{d t} , Q \right> = + 
  \left< P \, \; , \; \left[ \frac{\delta \, H}{\delta \, \mu}\, , \, Q \right] \right>
\end{equation}
    for $\displaystyle Q = \frac{\delta \, f}{\delta \, \mu} \in  Cl_{odd}^{0}(M,E)$.

\smallskip

This is a ``weak version" of the Euler equation appearing in Berezin and Perelomov's paper \cite{BP}. In our 
Hermitian context we do not have infinitesimal Ad-invariance, and so we obtain (\ref{lax1}) instead of a
standard equation such as 
$$
   \frac{d \, P}{d t} =  \left[ P\, ,  \frac{\delta \, H}{\delta \, \mu} \right] \; ,
$$
see \cite[Equation (8)]{BP}. For example, if we take 
$\mu = \left< P , \cdot \right>$ and $\displaystyle H_{k}(\mu) =  tr^\Delta \left( P^{k} \right)$,
    $k=1,2,3,\cdots ,$ we can easily check that {(assuming existence of $Q_0^{-1}$)}
    $$    \frac{\delta H_{k}}{\delta \mu} = k Q_0^{-1} (P^*)^{k-1} \; , $$
    and Equation (\ref{lax1}) on $Cl_{odd}^{0}(M,E)$ become 
    \begin{equation}
      \left<  \frac{d P}{d t} , Q \right> =  \,k\,
  \left< P \, \; , \; \left[\, Q_0^{-1} (P^*)^{k-1} \, , \, Q \, \right] \right> \; .  \label{kp}
    \end{equation}
    
    \begin{remark} \label{nonlocal}
The presence of the operator $Q_0^{-1}$ requires us to be careful. An injective self-adjoint pseudodiferential 
operator $Q_0$ has: 
 \begin{enumerate}
 	\item an inverse $Q_0^{-1}$ which is itself an injective self adjoint pseudodifferential operator 
 	\textit{if and only if $Q_0$ is not smoothing}
 	\item an inverse $Q_0^{-1}\notin Cl(M,E) $ \textit{if and only if $Q_0$ is smoothing.}  
 \end{enumerate}   		
The second case is the one which needs more attention. Indeed, if $Q_0$ is smoothing, e.g. $Q_0 = e^{-\Delta},$ 
then its formal symbol vanishes. This explains why its inverse cannot be a pseudodifferential operator. However, 
$Q_0$ is a self-adjoint injective compact operator. Hence, via spectral analysis, it is easy to define $Q_0^{-1}$ 
which is an unbounded operator with $L^2-$dense domain in $C^\infty(M,E).$ Therefore, Equation $(\ref{kp})$ is 
always well-stated. 
    \end{remark}

The foregoing equations are equations on the (regular) dual of the Lie algebra $Cl^0_{odd}(M,E)$. We can work 
directly on the Lie algebra $Cl^0_{odd}(M,E)$ and we can use more general pairings if we proceed as follows. 

\smallskip

We assume that there exists an operator 
$\mathbb{A} : Cl_{odd}^{0}(M,E) \rightarrow Cl_{odd}^{0}(M,E)$ such that the new pairing
$$
\left< X,Y \right>_\mathbb{A} = \left< X , \mathbb{A} (Y) \right>
$$
is Hermitian and non-degenerate. We think of $\mathbb{A}$ as a twist of our previous pairing or, motivated by 
\cite{fourier,Holm,KW}, see also \cite{GPoR}, as an ``inertia operator". 
We also consider the real part of $\left< \cdot \,,\, \cdot \right>_\mathbb{A}$,
$$
\mathfrak{Re}\left< X,Y \right>_\mathbb{A} = \mathfrak{Re}\left< X , \mathbb{A} (Y) \right> \; 
$$ 
for $X,Y \in Cl_{odd}^{0}(M,E)$. Since $\left< \cdot \, ,\, \cdot \right>_\mathbb{A}$ is Hermitian and 
non-degenerate, this new pairing is a symmetric and non-degenerate real-valued bilinear form which 
allows us to consider (in view of Remark \ref{nopos}) pseudo-Riemannian 
geometry. In order to do so, we define a new adjoint map as 
\begin{equation}  \label{ada}
\mathfrak{Re}\left< [X , Y] , Z \right>_\mathbb{A} = 
- \mathfrak{Re}\left< Y , ad_\mathbb{A}(X)\, Z \right>_\mathbb{A} =  
- \mathfrak{Re} \left<  ad_\mathbb{A}(X)\, Z , Y \right>_\mathbb{A} \; ,
\end{equation}
so that $ad_\mathbb{A}(X)$ is the adjoint of $ad_X$ in accordance with our sign convention, see also 
\cite[Section 2]{GPoR}. We compute $ad_\mathbb{A}$ explicitly as follows:
\begin{eqnarray*}
- \mathfrak{Re}{\left< Y , ad_\mathbb{A}(X)\, Z \right>}_\mathbb{A} & = & 
 -   \mathfrak{Re}\left< ad_X Y , Z  \right>_\mathbb{A} = 
\mathfrak{Re} \left< [X ,Y] , \mathbb{A}(Z)  \right> \\
 & = & \mathfrak{Re}\, tr^\Delta ([X,Y]\,Q_0\, \mathbb{A}(Z)^* ) \\
 & = & \mathfrak{Re}\, tr^\Delta\left(XY Q_0 \mathbb{A}(Z)^* - YX Q_0\,\mathbb{A}(Z)^*\right) \\
 & = & \mathfrak{Re}\, tr^\Delta\left( YQ_0 \mathbb{A}(Z)^*X - YX Q_0\,\mathbb{A}(Z)^*\right) \\
 & = & \mathfrak{Re}\, tr^\Delta\left(Y (Q_0\, \mathbb{A}(Z)^*X - X Q_0\,\mathbb{A}(Z)^*)\right) \\
   & = & \mathfrak{Re}\, tr^\Delta\left(Y \left[Q_0\, \mathbb{A}(Z)^*,X\right]\right) \\
    & = & \mathfrak{Re}\, tr^\Delta\left(Y Q_0 Q_0^{-1} \left[Q_0\, \mathbb{A}(Z)^*,X\right]\right) \; .
\end{eqnarray*}    

We set 
\begin{equation} \label{auxA}
- \mathbb{A}(R)^* = Q_0^{-1} \left[Q_0\, \mathbb{A}(Z)^*,X\right]\; .
\end{equation}
Then, $- \mathfrak{Re}{\left< Y , ad_\mathbb{A}(X)\, Z \right>}_\mathbb{A} = 
-\mathfrak{Re}\, tr^\Delta\left(Y\, Q_0\, \mathbb{A}(R)^* \right)
= - \mathfrak{Re}\,\left< Y , R \right>_\mathbb{A}\,$, and therefore $ad_\mathbb{A}(X) Z = R$. We compute $R$ 
quite easily. Equation (\ref{auxA}) implies
$$
\mathbb{A}(R) = \left[X, Q_0\, \mathbb{A}(Z)^*\right]^* Q_0^{-1} = \left[ \mathbb{A}(Z)\, Q_0 , X^* \right] 
Q_0^{-1}  \; ,
$$
and so we conclude that
\begin{equation} \label{adA}
ad_\mathbb{A}(X)\, Z = \mathbb{A}^{-1}\left( \left[ \mathbb{A}(Z)\, Q_0,X^*\right]Q_0^{-1} \right) {
= - \mathbb{A}^{-1}\left( [ad_{\mathbb{A}(Z) Q_0} X^*] Q_0^{-1} \right)\; .}
\end{equation}

\subsection{{ Euler-Lagrange} equations}
{
Now we use $ad_\mathbb{A}$ and the bilinear form $\mathfrak{Re}\left<\cdot , \cdot \right>_\mathbb{A}$ 
to write down equations of motion on the Lie group $Cl_{odd}^{0,*}(M,E)$. Our equations are Euler-Lagrange
equations arising from a natural action functional. We follow, roughly, Taylor's lecture notes \cite{T}.
}

\smallskip

We set $\left< \cdot | \cdot \right> = \mathfrak{Re}\left< \cdot , \cdot \right>_\mathbb{A}$ just to simplify our 
notation. First of all, we extend the symmetric and non-degenerate bilinear form $\left< \cdot | \cdot \right>$ to 
a pseudo-Riemannian metric on $G=Cl^{0,*}_{odd}(M,E)$ via right translation:
\begin{equation} \label{metric1}
g(P)(V,W) = \left< T_P R_{P^{-1}} V | T_P R_{P^{-1}} W \right> \; ,
\end{equation}
in which $P \in G$, $W,V \in T_P G$, and $R_{P^{-1}}$ is right translation. {We simplify this 
expression using the identification $V = (P+\epsilon Q_1)'(0)$ and $W = (P+\epsilon Q_2)'(0)$
for $Q_1, Q_2 \in Lie(G)$; }
we obtain 
$$
g(P)(V,W) = \left< Q_1 P^{-1} | Q_2 P^{-1} \right> \; .
$$

Now we set up the {kinetic energy} Lagrangian functional on curves in $G$,
$$
I[P(t)] = \int_a^b g(P(t))(\dot{P}(t),\dot{P}(t)) dt = \int_a^b \left< \dot{P}(t) P(t)^{-1} | 
\dot{P}(t) P(t)^{-1} \right> dt \; ,
$$
in which $\dot{P}(t)$ is now considered an an element of $Lie(G)$ for each $t$, and we find the corresponding
 equation for critical points of $I$. We assume that $t \mapsto P(t)$ is a critical, and we deform 
this curve slightly via $P(t) \mapsto P(t) + \epsilon \eta(t) Q$, with $\eta(a)=\eta(b)=0$ and $Q \in Lie(G)$  
{in such a way that that this deformed curve lies in $G$ equipped with its Fr\'echet topology (recall that 
$Cl^{0,*}(M,E)$ is open in $Cl^{0,*}(M,E)$)}.  Because $t \mapsto P(t)$ is critical, we have
$$
\left. \frac{d}{d\epsilon}\right|_{\epsilon=0} I [P(t) + \epsilon \eta(t) Q] = 0\; .
$$
Hereafter we omit $t$ dependence for clarity. We have: 
$$
\left. \frac{d}{d\epsilon}\right|_{\epsilon=0} I [P(t) + \epsilon \eta(t) Q] =
\int_a^b \left. \frac{d}{d\epsilon}\right|_{\epsilon=0} 
\left< (\dot{P}+\epsilon \dot{\eta} Q) (P+\epsilon \eta Q)^{-1} |  (\dot{P}+\epsilon \dot{\eta} Q) 
(P+\epsilon \eta Q)^{-1} \right> dt = 0 \; ,
$$
this is, 
$$
\int_a^b \left< \dot{\eta} Q P^{-1} - \dot{P} P^{-1} \eta Q P^{-1} |  \dot{P} P^{-1} \right> dt = 
\int_a^b \dot{\eta} \left<  Q P^{-1} | \dot{P}P^{-1} \right> dt - \int_a^b \eta \left< \dot{P} P^{-1} Q P^{-1} |  
\dot{P} P^{-1} \right> dt
=0 \; .
$$
We integrate by parts and use the boundary conditions for $\eta$; we obtain
$$
- \int_a^b \eta \left<  Q P^{-1} | \dot{P}P^{-1} \right>^{\cdot} dt - 
\int_a^b \eta \left< \dot{P} P^{-1} Q P^{-1} |  \dot{P} P^{-1} \right> dt = 0 \; ,
$$
this is,
$$
\int_a^b \eta \left\{ \left< Q P^{-1} \dot{P} P^{-1} | \dot{P} P^{-1} \right> -
\left<  Q P^{-1} | (\dot{P}P^{-1})^{\cdot} \right> \right\} dt 
- \int_a^b \eta \left< \dot{P} P^{-1} Q P^{-1} |  \dot{P} P^{-1} \right> dt = 0 \; .
$$
Since $\eta(t)$ is arbitrary, we find the equation of motion 
\begin{equation} \label{geodesic1}
 \left< Q P^{-1} \dot{P} P^{-1} | \dot{P} P^{-1} \right> -
\left<  Q P^{-1} | (\dot{P}P^{-1})^{\cdot} \right> 
- \left< \dot{P} P^{-1} Q P^{-1} |  \dot{P} P^{-1} \right> = 0 
\end{equation}
in which $Q$ is an arbitrary element of $Lie(G)$. 

Since $\dot{P} P^{-1}$ and $Q P^{-1}$ belong to $Lie(G)$, we can write $\dot{P} P^{-1}=X$ and $Q P^{-1}=W$ for
$X,W \in Lie(G)$. Equation (\ref{geodesic1}) becomes
$$
\left< W X | X \right> - \left<  W | \dot{X} \right> 
- \left< X W | X \right> = 0 
$$
for all $W \in Lie(G)$, this is,
\begin{equation} \label{geodesic2}
\left< [W , X] | X \right> = \left<  W | \dot{X} \right> 
\end{equation}
for all $W \in Lie(G)$. As pointed out in \cite{T}, if we solve for $X$ in (\ref{geodesic2}), the curve 
$P(t)$ is recovered via $\dot{P}(t) = X(t) P(t)$. Thus, Equation (\ref{geodesic2}) ---an equation posed
on $Lie(G)$--- determines a family of curves on $G$. It remains to find a ``strong" formulation of 
(\ref{geodesic2}). We go back to the notation used in Subsection 4.1. Equation (\ref{geodesic2}) becomes
$$
 \mathfrak{Re} \left< ad_X W , X \right>_\mathbb{A} = 
 \mathfrak{Re} \left< W , \dot{X} \right>_\mathbb{A} \; ,
$$
and therefore, using the operator $ad_\mathbb{A}$ we obtain
$$
\mathfrak{Re} \left< W , ad_\mathbb{A}(X) X \right>_\mathbb{A} = 
\mathfrak{Re} \left< W , \dot{X} \right>_\mathbb{A} \; .
$$
Non-degeneracy of the inner product $\mathfrak{Re}\left< \cdot , \cdot \right>_\mathbb{A}$ implies that 
$X(t) \in Cl^0_{odd}(M,E)$ satisfies the
non-linear equation
\begin{equation} \label{lax2}
	\frac{d}{dt} X = ad_\mathbb{A}(X(t)) X(t) \; . 
\end{equation}
We note the formal similarity between (\ref{geodesic2}) and the Hamiltonian equation (\ref{lax1}). Due to this
fact, we naturally call (\ref{geodesic2}), or (\ref{lax2}), the Euler equation on $Cl^0_{odd}(M,E)$. We have
proven the following theorem:

\begin{Theorem}
The Euler equation 
\begin{equation} \label{lax3}
	\frac{d}{dt} X = \mathbb{A}^{-1}\left( \left[ \mathbb{A}(X)\, Q_0,X^*\right]Q_0^{-1} \right) 
\end{equation}
on $Cl^0_{odd}(M,E)$, is the Euler-Lagrange equation of the kinetic energy action functional on the Fr\'echet Lie 
group $Cl^{0,*}_{odd}(M,E)$ equipped with the pseudo-Riemannian metric $(\ref{metric1})$.
\end{Theorem}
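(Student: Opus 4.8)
The plan is to obtain the equation as the Euler--Lagrange equation of the kinetic energy action associated with the right-invariant metric (\ref{metric1}), and then to rewrite the resulting weak equation as the strong equation (\ref{lax3}) by means of the explicit formula for $ad_\mathbb{A}$ derived in (\ref{adA}). Concretely, I would form the action $I[P(t)] = \int_a^b g(P(t))(\dot{P}(t),\dot{P}(t))\,dt$ and compute its first variation along a deformation $P(t)\mapsto P(t)+\epsilon\,\eta(t)\,Q$ with $\eta(a)=\eta(b)=0$ and $Q\in Lie(G)$.

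Two analytic points must be secured before differentiating. First, for $|\epsilon|$ small the deformed curve still takes values in $G=Cl^{0,*}_{odd}(M,E)$; this is legitimate because $G$ is open in $Cl^{0}_{odd}(M,E)$. Second, inversion is smooth on the Fr\'echet Lie group $G$, so $\left.\frac{d}{d\epsilon}\right|_{\epsilon=0}(P+\epsilon\,\eta\,Q)^{-1} = -P^{-1}\eta\,Q\,P^{-1}$; this is what lets me differentiate $\dot{P}P^{-1}$ under the integral. Carrying out the differentiation and then integrating by parts, using $\eta(a)=\eta(b)=0$, produces the weak equation (\ref{geodesic1}).

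I would then descend to the Lie algebra by setting $X=\dot{P}P^{-1}$ and $W=QP^{-1}$, both lying in $Lie(G)=Cl^{0}_{odd}(M,E)$; as $Q$ ranges over the whole Lie algebra so does $W$, and (\ref{geodesic1}) collapses to (\ref{geodesic2}), that is $\left< [W,X] | X \right> = \left< W | \dot{X} \right>$ for every $W$. Rewriting the left-hand side through the defining relation (\ref{ada}) of the adjoint map $ad_\mathbb{A}$, and then invoking non-degeneracy of the bilinear form $\mathfrak{Re}\left< \cdot,\cdot \right>_\mathbb{A}$ (which holds by Theorem \ref{Th:Hermitian} and Remark \ref{nopos} under the standing hypotheses that $Q_0$ is injective and self-adjoint), I may strip off the arbitrary test element $W$ and arrive at the strong form (\ref{lax2}), $\dot{X}=ad_\mathbb{A}(X)\,X$. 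Substituting $Z=X$ into the explicit expression (\ref{adA}) then yields exactly (\ref{lax3}).

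The algebraic content of the argument is light once $ad_\mathbb{A}$ is in hand; the real work is analytic. The step I expect to be most delicate is the passage from the weak equation (\ref{geodesic2}) to the strong equation (\ref{lax2}): it rests on non-degeneracy of $\mathfrak{Re}\left< \cdot,\cdot \right>_\mathbb{A}$ on all of $Cl^{0}_{odd}(M,E)$, together with the requirement that the maps $\mathbb{A}^{-1}$ and $Q_0^{-1}$ appearing in (\ref{lax3}) be well-defined, in the sense made precise in Remark \ref{nonlocal}. One must also confirm that the fundamental lemma of the calculus of variations and differentiation under the integral sign remain valid for curves valued in the Fr\'echet group $Cl^{0,*}_{odd}(M,E)$.
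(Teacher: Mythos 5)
Your proposal is correct and follows essentially the same route as the paper, whose proof of this theorem is exactly the variational computation preceding it in Subsection 4.2: deform $P \mapsto P + \epsilon\,\eta\, Q$, integrate by parts to get the weak equation (\ref{geodesic1}), descend to $Lie(G)$ via $X=\dot{P}P^{-1}$, $W=QP^{-1}$ to obtain (\ref{geodesic2}), pass to the strong form (\ref{lax2}) by non-degeneracy of $\mathfrak{Re}\left<\cdot,\cdot\right>_\mathbb{A}$, and substitute $Z=X$ in (\ref{adA}). Your explicit attention to the analytic caveats (openness of $G$ in $Cl^0_{odd}(M,E)$, smoothness of inversion, well-definedness of $\mathbb{A}^{-1}$ and $Q_0^{-1}$ as in Remark \ref{nonlocal}) only makes precise what the paper treats implicitly.
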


{
Equation (\ref{lax2}) is formally analog to the Euler equation posed on a Lie group $G$ equipped with a
{\em Riemannian} metric. In  this Riemannian case, our foregoing computations translate {\em mutatis mutandis} into 
the well-known fact that Euler equations determine geodesics on $G$, see for instance \cite{Holm,KW,T} and 
references therein. 
}

\begin{remark}
If we take $Q_0 = Id$ and we pose Equation $(\ref{lax3})$ on the subgroup of
self-adjoint operators, we obtain 
$$
\mathbb{A}\left( \frac{d}{dt} X \right) = \left[ \mathbb{A}(X) ,X \right] = - ad_{\mathbb{A}(X)} X\; ,
$$
an equation that looks exactly as the classical Euler equation in $so(3)$, see \cite[Theorem 7.2]{Holm}.
\end{remark}

{
\subsection{Remarks on integrability}

Motivated by Manakov's observation on the integrability of the rigid body equation, see \cite{Ma}, we state:

\begin{Proposition}  \label{propint}
The Euler equation 
\begin{equation} \label{lax4}
	\frac{d}{dt} X = \mathbb{A}^{-1}\left( \left[ \mathbb{A}(X)\, Q_0,X^*\right]Q_0^{-1} \right) 
\end{equation}
on $Cl^0_{odd}(M,E)$ is equivalent to the Lax pair equation
\begin{equation} \label{lax5}
\frac{d}{dt} (\mathbb{A}(X)Q_0 + \xi J^2) = \left[ \mathbb{A}(X)\, Q_0 + \xi J^2 , X^* + \xi J \right]
\end{equation}
in which $\xi$ is a complex parameter and $J$ is an operator satisfying $\mathbb{A}(X) Q_0 J = J \mathbb{A}(X) Q_0$ 
and $X^* J^2 = J^2 X^*$.
\end{Proposition}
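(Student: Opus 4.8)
The plan is to prove the equivalence by expanding the right-hand side of the Lax equation $(\ref{lax5})$ as a polynomial in the spectral parameter $\xi$ and matching coefficients. Throughout I treat $J$ and $Q_0$ as fixed (time-independent) operators and I use that $\mathbb{A}$ is a fixed linear map, so that $\frac{d}{dt}$ commutes with $\mathbb{A}$. To lighten the notation I would write $M = \mathbb{A}(X)\,Q_0$, so that the quantity whose evolution is governed by $(\ref{lax5})$ is $M + \xi J^2$.

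First I would expand the commutator on the right-hand side of $(\ref{lax5})$ by bilinearity:
\[
\left[ M + \xi J^2 , X^* + \xi J \right] = [M,X^*] + \xi\,[M,J] + \xi\,[J^2,X^*] + \xi^2\,[J^2,J]\; .
\]
The term $[J^2,J]$ vanishes because any operator commutes with itself; the term $[M,J] = \mathbb{A}(X)Q_0 J - J\mathbb{A}(X)Q_0$ vanishes by the first hypothesis $\mathbb{A}(X)Q_0 J = J\mathbb{A}(X)Q_0$; and the term $[J^2,X^*] = J^2 X^* - X^* J^2$ vanishes by the second hypothesis $X^*J^2 = J^2 X^*$. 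Hence all $\xi$-dependence on the right-hand side cancels and the commutator collapses to the $\xi$-independent expression $[M,X^*] = \left[ \mathbb{A}(X)Q_0, X^* \right]$.

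Next I would compute the left-hand side. Since $J$ and $Q_0$ are independent of $t$, we have $\frac{d}{dt}(\xi J^2) = 0$ and $\frac{d}{dt}(\mathbb{A}(X)Q_0) = \mathbb{A}(\dot{X})\,Q_0$, using linearity of $\mathbb{A}$. Thus $(\ref{lax5})$ is equivalent, for every value of $\xi$, to the single $\xi$-free identity
\[
\mathbb{A}(\dot{X})\,Q_0 = \left[ \mathbb{A}(X)\,Q_0 , X^* \right]\; .
\]
Multiplying on the right by $Q_0^{-1}$ and applying the invertible operator $\mathbb{A}^{-1}$ transforms this into
\[
\dot{X} = \mathbb{A}^{-1}\left( \left[ \mathbb{A}(X)\,Q_0 , X^* \right] Q_0^{-1} \right)\; ,
\]
which is precisely the Euler equation $(\ref{lax4})$. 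The argument is reversible: starting from $(\ref{lax4})$ one recovers the $\xi^0$ identity above, while the commutation hypotheses force the $\xi^1$ and $\xi^2$ coefficients to vanish identically, so that $(\ref{lax5})$ holds for all $\xi$. This establishes the claimed equivalence.

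The step requiring the most care is not the algebra, which is routine, but the logical status of the two commutation hypotheses $\mathbb{A}(X)Q_0 J = J\mathbb{A}(X)Q_0$ and $X^* J^2 = J^2 X^*$: since $X = X(t)$ evolves in time, these relations must be read as constraints holding \emph{along the trajectory}, and one should confirm that a single fixed $J$ can be chosen compatibly for all $t$, as well as that $Q_0^{-1}$ is understood in the sense discussed in Remark \ref{nonlocal} when $Q_0$ is smoothing. Once these consistency conditions are granted, the equivalence reduces to the coefficient-matching computation sketched above.
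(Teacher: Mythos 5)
Your proof is correct and follows essentially the same route as the paper: the paper's (very terse) proof simply rewrites the Euler equation as $\frac{d}{dt}\left(\mathbb{A}(X)Q_0\right) = \left[\mathbb{A}(X)Q_0, X^*\right]$ and notes its equivalence with the Lax equation for arbitrary $\xi$, which is exactly your coefficient-matching argument with the vanishing of the $\xi$- and $\xi^2$-terms left implicit. Your explicit expansion, and your closing remark that the commutation hypotheses on $J$ must hold along the whole trajectory, merely make rigorous what the paper takes for granted.
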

\begin{proof}
Equation (\ref{lax4}) can be written as
$$
\frac{d}{dt} \left( \mathbb{A}(X)Q_0 \right) =  \left[ \mathbb{A}(X)\, Q_0,X^*\right]\; ,
$$
and this equation is  equivalent to Equation (\ref{lax5}) for arbitrary values of $\xi$.
\end{proof}

We interpret this proposition as saying that our Euler equation (\ref{lax4}) posed on $Cl^{0}_{odd}(M,E)$ admits a
Lax pair formulation and it is therefore integrable. We can also prove that 
$$
I_k = tr^\Delta \left( (\mathbb{A}(X)Q_0 + \xi J^2)^k \right)
$$
is conserved along solutions to (\ref{lax5}) for arbitrary values of $\xi$ and $k \geq 1$. Indeed, it is easy to 
check that
$$
\frac{d}{dt}\left( (\mathbb{A}(X)Q_0 + \xi J^2)^k \right) = 
tr^\Delta \left( [(\mathbb{A}(X)Q_0 + \xi J^2)^k , X^* + \xi J ] \right) = 0
$$
on solutions to (\ref{lax5}), and therefore expansion of $I_k$ in powers of $\xi$ yields integrals of
motion for (\ref{lax4}). Since the conditions on $J$ appearing in Proposition \ref{propint} imply 
that the operators $\mathbb{A}(X)Q_0$ and $\xi J^2$ commute, we can easily obtain explicit expressions for these 
integrals by expanding $I_k$. We present an example in Section 6.
}

\section{{Pseudo-Riemannian Geometry on $Cl^{0,*}_{odd}(M,E)$} }

{
In this section we review some basics facts of the pseudo-Riemannian geometry of the regular Fr\'echet group
$Cl^{0,*}_{odd}(M,E)$, motivated by Arnold's classical paper \cite{fourier}. We fix an inertia operator 
$\mathbb{A}$ and we consider the pseudo-Riemannian metric on $Cl^{0,*}_{odd}(M,E)$ induced by right translation
of the non-degenerate and symmetric bilinear form $\mathfrak{Re}\left< \cdot\, ,\, \cdot,  \right>_\mathbb{A}\,$,
see Equation (\ref{metric1}). 
}

We note that there exist some difficulties in describing the whole space of connection 1-forms 
$$\Omega^1(Cl^{0,*}_{odd}(M,V),Cl^{0}_{odd}(M,V))\; .$$ Indeed, to our knowledge, the space of smooth linear maps 
acting on $Cl^{0}_{odd}(M,V)$ is actually not well-understood. 
In the classical setting of a \textit{Riemannian} (e.g. finite dimensional, or Hilbert) Lie group $G$ with 
Lie algebra $\mathfrak g$, the Levi-Civita connection 1-form (i.e metric-compatible and torsion-free) reads as 
$$\theta_XY = \frac{1}{2} \left\{ad_XY - ad_X^*Y - ad_Y^*X\right\}\; ,$$
in which $ad^*$ is the adjoint of $ad$ with respect to the metric of $G$ {and $X,Y$ are {\em left 
invariant} vector fields, see \cite[Proposition 1.7]{Freed88}. }
It is possible to go beyond this well-known result, and extend it to (pseudo-)Riemannian right-invariant metrics, 
if an adjoint for $ad$ is known. {Formal calculations have been already carried out, see for example
the classical Arnold's paper \cite{fourier} or \cite[Section 2]{GPoR} and references therein, but in the context of 
pseudodifferential operators, finding a rigorous {({\em i.e.} truly smooth)} adjoint of the adjoint map, as 
described in \cite{Ma2020-4}, remains a difficult task. We can bypass this difficulty here,} since we already have 
$ad_\mathbb{A}$ at our disposal. 

\begin{Theorem}   \label{oneform}
	Let $(X,Y)\in Cl^0_{odd}(M,E)^2. $ We define, using right invariance, the connection 1-form 
	$$\theta_X Y = \frac{1}{2}\left\{  ad_X Y - ad_\mathbb{A}(X)Y - ad_\mathbb{A}(Y)X\right\}\; .$$
	Then we have that:
	\begin{itemize}
		\item[(a)] \label{p:TF}$\forall (X,Y)\in Cl^0_{odd}(M,E)^2, $ $\theta_X Y - \theta_Y X =  ad_X Y $ 
		(Torsion-free)
			\item[(b)] \label{p:PR} $\forall (X,Y,Z)\in Cl^0_{odd}(M,E)^3, $ 
		$\mathfrak{Re}\left< \theta_X Y ,Z \right>_\mathbb{A} = 
			-\mathfrak{Re}\left<  Y ,\theta_X Z \right>_\mathbb{A} $ (Pseudo-Riemannian metric compatibility) 
	\end{itemize}
Moreover, $\theta: (X,Y) \mapsto \theta_X Y$ is the only bilinear map which satisfies these two properties. 
\end{Theorem}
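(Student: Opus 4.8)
The plan is to verify the two listed properties by direct computation and then establish uniqueness through a standard algebraic argument. For property (a), the torsion-free condition, I would simply compute $\theta_X Y - \theta_Y X$ from the definition. Writing out both terms, the symmetric contributions $ad_\mathbb{A}(X)Y + ad_\mathbb{A}(Y)X$ appear in both $\theta_X Y$ and $\theta_Y X$ and cancel upon subtraction, leaving $\frac{1}{2}(ad_X Y - ad_Y X)$. Since $ad_X Y = -[X,Y]$ under the sign convention of this paper, and $ad_Y X = -[Y,X] = [X,Y]$, we get $\frac{1}{2}(ad_X Y - ad_Y X) = \frac{1}{2}(-[X,Y] - [X,Y]) = -[X,Y] = ad_X Y$, as claimed. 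This step is essentially bookkeeping with the sign convention.

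For property (b), the metric compatibility condition, I would compute $\mathfrak{Re}\left< \theta_X Y ,Z \right>_\mathbb{A} + \mathfrak{Re}\left<  Y ,\theta_X Z \right>_\mathbb{A}$ and show it vanishes. Expanding each term using the definition of $\theta$ and linearity of $\mathfrak{Re}\left< \cdot , \cdot \right>_\mathbb{A}$, I would collect the six resulting pairings. The essential input is the defining relation (\ref{ada}) for $ad_\mathbb{A}$, namely $\mathfrak{Re}\left< [X,Y], Z \right>_\mathbb{A} = -\mathfrak{Re}\left< Y, ad_\mathbb{A}(X) Z \right>_\mathbb{A}$, together with the symmetry of the real bilinear form. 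Each term $\mathfrak{Re}\left< ad_\mathbb{A}(\cdot)\, \cdot , \cdot \right>_\mathbb{A}$ can be rewritten using this adjoint relation so that the $ad_\mathbb{A}$ maps are moved off one slot and onto another; the terms should then pair up and cancel. The bookkeeping here is more delicate than in (a) because there are three distinct vectors and the form is only symmetric (not $Ad$-invariant), so I expect to use the defining property of $ad_\mathbb{A}$ repeatedly and carefully track which argument each adjoint acts on.

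For uniqueness, I would follow the classical Levi-Civita argument. Suppose $\tilde\theta$ is another bilinear map satisfying (a) and (b). From metric compatibility applied to the three cyclic permutations of $(X,Y,Z)$ and from the torsion-free relation $\tilde\theta_X Y - \tilde\theta_Y X = ad_X Y$, I would derive the Koszul-type formula expressing $\mathfrak{Re}\left< \tilde\theta_X Y, Z \right>_\mathbb{A}$ purely in terms of the bracket and the bilinear form. Specifically, adding and subtracting the three compatibility identities and substituting the torsion relation eliminates the unknown $\tilde\theta$ from the right-hand side, yielding a fixed expression for $\mathfrak{Re}\left< \tilde\theta_X Y, Z \right>_\mathbb{A}$ valid for all $Z$. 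Since $\mathfrak{Re}\left< \cdot , \cdot \right>_\mathbb{A}$ is non-degenerate (by the Lemma following Theorem \ref{Th:Hermitian}), this determines $\tilde\theta_X Y$ uniquely, forcing $\tilde\theta = \theta$.

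The main obstacle I anticipate is the verification of (b). Because the pairing is genuinely pseudo-Riemannian and lacks $Ad$-invariance, I cannot appeal to the shortcuts available in the bi-invariant or purely Riemannian settings; I must instead rely solely on the single defining identity for $ad_\mathbb{A}$ in (\ref{ada}) and the symmetry of the real form. Keeping track of which slot each $ad_\mathbb{A}$ acts on, and ensuring every term finds its cancelling partner, will require care. A secondary subtlety is that all objects live in the Fréchet algebra $Cl^0_{odd}(M,E)$, so I should confirm that $\theta_X Y$ genuinely lands in $Cl^0_{odd}(M,E)$ and that the bilinearity and smoothness claims are compatible with the Kontsevich--Vishik topology; however, since $ad_\mathbb{A}$ was already constructed as a map $Cl^0_{odd}(M,E) \to Cl^0_{odd}(M,E)$ in Subsection 4.1, this should follow directly from the closure properties established there.
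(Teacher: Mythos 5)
Your proposal is correct and follows essentially the same route as the paper's proof: part (a) by direct cancellation of the symmetric terms together with the sign convention $ad_X Y = -[X,Y]$, part (b) by repeatedly applying the defining adjoint relation (\ref{ada}) and the symmetry of $\mathfrak{Re}\left< \cdot\, ,\, \cdot \right>_\mathbb{A}$ until the six pairings cancel (they do, exactly as you anticipate), and uniqueness via the Koszul-type formula combined with non-degeneracy of the form. No gaps beyond the computational bookkeeping you already flagged, which goes through as planned.
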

\begin{proof}
As in the previous section, in this proof we set $\left< \cdot | \cdot \right> = 
\mathfrak{Re}\left< \cdot , \cdot \right>_\mathbb{A}$ for ease of notation.

	We first check that $\theta$ satisfies (a) and (b). 
	By direct computation, we have: 
	\begin{eqnarray*}
\theta_X Y-\theta_YX & = &\frac{1}{2}\left\{ad_XY - ad_\mathbb{A}(X)Y - ad_\mathbb{A}(Y)X\right\} \\
		&& - \frac{1}{2}\left\{ad_YX - ad_\mathbb{A}(Y)X - ad_\mathbb{A}(X)Y\right\} \\
		& = & ad_XY, 
		\end{eqnarray*}
	which proves (a). We now compute using Equation (\ref{ada}): 
	\begin{eqnarray*}
	2\left< \theta_X Y | Z \right> & = & \left< ad_XY - ad_\mathbb{A}(X)Y - ad_\mathbb{A}(Y)X | Z \right> \\
   & = & \left< ad_X Y | Z \right> - \left< ad_\mathbb{A}(X)Y|Z\right> - \left<ad_\mathbb{A}(Y)X | Z \right> \\
	& = & \left< ad_\mathbb{A}(X) Z | Y \right> { -} \left< ad_X Z | Y \right> { - } \left<ad_Y Z | X \right> \\
	& = & \left< ad_\mathbb{A}(X) Z | Y \right> { - } \left< ad_X Z | Y \right> { + } \left<ad_Z Y | X \right> \\
& = &  \left< ad_\mathbb{A}(X) Z | Y \right> - \left< ad_X Z | Y \right> + \left<ad_\mathbb{A}(Z) X | Y \right> \\
		& = & - 2\left<Y | \theta_X Z\right>,
		\end{eqnarray*}
	which proves (b).
	
	Now, let $$\Theta: (X,Y) \in  Cl^0_{odd}(M,E)^2 \mapsto \Theta_XY \in Cl^0_{odd}(M,E)$$ be a bilinear form 
	satisfying (a) and (b).
	Then $$\left< \Theta_X Y | Z \right> + \left<  Y | \Theta_X Z \right> = 0\; ,$$
	$$\left< \Theta_Z X | Y \right> + \left<  X | \Theta_Z Y \right> = 0\; ,$$
	$$\left< \Theta_Y Z | X \right> + \left<  Z | \Theta_Y X \right> = 0\; .$$
	From the third line and (a) we get that 
	$$\left<  Z | \Theta_X Y \right> = -\left< \Theta_Y Z | X \right> - \left<  Z | [X,Y] \right>$$ 
	and from the first line we get that $$\left< \Theta_X Y | Z \right> = - \left<  Y | \Theta_X Z \right>\; .$$
	Combining these two equalities, and exploiting properties (a) and (b), we have: 
	\begin{eqnarray*}
	2 \left< \Theta_X Y | Z \right> & = & 
	               -\left< \Theta_Y Z | X \right> - \left<  Z | [X,Y] \right> - \left<  Y | \Theta_X Z \right> \\
		& = & -\left< -[Y,Z] + \Theta_Z Y | X \right> - \left<  Z | [X,Y] \right> - \left<  Y | -[X,Z] + \Theta_Z X 
		                                                                                               \right> \\
		& = & \left< [Y,Z] | X \right> - \left<  Z | [X,Y] \right> + \left<  Y | [X,Z]  \right> \\
		& = & 2 \left<\theta_XY | Z \right>.
		\end{eqnarray*}
	Since $\left< \cdot\, |\, \cdot \right>$ is non-degenerate, this equality ends the proof.
	\end{proof}

{	
It is important for us to highlight the fact that the proof of Theorem \ref{oneform} goes through because we can use 
the smooth adjoint $ad_\mathbb{A}$. Now, using $\theta_XY$ we can define the curvature operator and sectional 
curvature of the Lie group $Cl^{0,*}_{odd}(M,E)$ as follows:

The curvature operator for the connection $\theta$ is given, at the identity of $Cl^{0,*}_{odd}(M,E)$, by
$$ \text{R}_{\mathbb{A}}(X,Y) = \left[\ \theta_{X}\ ,\ \theta_{Y}\ \right] - \theta_{[X,Y]} $$
for every $X$ and $Y$ in $Cl^{0}_{odd}(M,E)$, see also \cite[Equation (1.10)]{Freed88}.
Hence, the sectional curvature associated to the biplane generated by $X$ and $Y$ is
\begin{equation}
K_{\mathbb{A}}(X,Y) = 
- \frac{\left< \text{R}_{\mathbb{A}}(X,Y)X\ \vert\ Y\right>}{\vert X \wedge Y \vert_{\mathbb{A}}^{2}} \label{delta}
\end{equation}
whenever the area of the parallelogram spanned by $X,Y$, $\vert X \wedge Y \vert_{\mathbb{A}}$, is different from 
zero.

\smallskip

These constructions yield Theorem 5 of Arnold's \cite{fourier}. Using our foregoing notation this theorem reads as 
follows, see \cite[Proposition 2.1]{GPoR}:

\begin{Theorem}  \label{arnold}
Let $\mathbb{A}$ be an inertia operator and set $\mathfrak{N}(X,Y) =
\frac{1}{2}\ ( \text{ad}_{\mathbb{A}}(X)Y + \text{ad}_{\mathbb{A}}(Y) X )$. Given $X$ and $Y$ in $Cl^{0}_{odd}(M,E)$ 
 we have the identity
$$ \vert\ X \wedge Y\ \vert_{A}^{2}\ K_{\mathbb{A}}(X,Y)  =  $$
$$ - \frac{3}{4} \left<\ [X,Y]\, |\, [X,Y]\ \right> 
+ \frac{1}{2} \left<\ [X,Y]\ \vert\ \text{ad}_{\mathbb{A}}(X)Y - \text{ad}_{\mathbb{A}}(Y)X\ \right> $$
$$ + \left<\ \mathfrak{N}(X,Y)\,|\, \mathfrak{N}(X,Y)\ \right>
-\left<\ \mathfrak{N}(X,X)\ \vert\ \mathfrak{N}(Y,Y)\ \right>  \ .  $$
\end{Theorem}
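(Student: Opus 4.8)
The plan is to compute the numerator $\left<\text{R}_{\mathbb{A}}(X,Y)X\,|\,Y\right>$ of the sectional curvature (\ref{delta}) directly from the explicit connection one-form of Theorem \ref{oneform}, reducing everything to $[X,Y]$, to the symmetric tensor $\mathfrak{N}$, and to the adjoint relation (\ref{ada}) defining $ad_{\mathbb{A}}$. Throughout I write $\left<\cdot\,|\,\cdot\right> = \mathfrak{Re}\left<\cdot\,,\cdot\right>_{\mathbb{A}}$, and I record at the outset the single rewriting on which the whole calculation rests, namely $\theta_X Y = \tfrac12\,ad_X Y - \mathfrak{N}(X,Y)$; in particular $\theta_X X = -\mathfrak{N}(X,X)$, since $ad_X X = -[X,X]=0$. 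I also note the two conventions that will control every sign: the redefinition $ad_X Y = -[X,Y]$ forced by right translation, and the fact that the bracket occurring in $\theta_{[X,Y]}$ inside $\text{R}_{\mathbb{A}}$ is the bracket of the \emph{right-invariant} vector fields extending $X,Y$, which equals $ad_X Y = -[X,Y]$.

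First I expand $\text{R}_{\mathbb{A}}(X,Y) = [\theta_X,\theta_Y]-\theta_{[X,Y]}$, apply it to $X$, and pair with $Y$. Using metric compatibility (item (b) of Theorem \ref{oneform}) in the form $\left<\theta_X A\,|\,B\right> = -\left<A\,|\,\theta_X B\right>$, the two commutator contributions collapse to $\left<\theta_Y X\,|\,\theta_X Y\right> - \left<\theta_X X\,|\,\theta_Y Y\right>$. The second term is at once $-\left<\mathfrak{N}(X,X)\,|\,\mathfrak{N}(Y,Y)\right>$. For the first I substitute $\theta_Y X = -\tfrac12\,ad_X Y - \mathfrak{N}(X,Y)$ and $\theta_X Y = \tfrac12\,ad_X Y - \mathfrak{N}(X,Y)$ (using $ad_Y X = -ad_X Y$ and the symmetry of $\mathfrak{N}$); because $\left<\cdot\,|\,\cdot\right>$ is a real symmetric bilinear form the two cross terms cancel, leaving $-\tfrac14\left<[X,Y]\,|\,[X,Y]\right> + \left<\mathfrak{N}(X,Y)\,|\,\mathfrak{N}(X,Y)\right>$ once $ad_X Y = -[X,Y]$ is inserted.

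The heart of the proof is the remaining term $\left<\theta_{[X,Y]}X\,|\,Y\right>$. Writing $\theta_{[X,Y]}X = \tfrac12\,ad_{[X,Y]}X - \mathfrak{N}([X,Y],X)$, I push every $ad$ and $ad_{\mathbb{A}}$ back onto a bracket slot using, repeatedly, the adjoint relation $\left<ad_X Y\,|\,Z\right> = \left<Y\,|\,ad_{\mathbb{A}}(X)Z\right>$ (this is precisely (\ref{ada}) rewritten through $ad_X Y = -[X,Y]$), the symmetry of the metric, and the elementary identities $[[X,Y],X] = ad_X[X,Y]$ and $[[X,Y],Y] = ad_Y[X,Y]$. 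Each summand then collapses to a multiple of either $\left<[X,Y]\,|\,[X,Y]\right>$ or $\left<[X,Y]\,|\,ad_{\mathbb{A}}(X)Y - ad_{\mathbb{A}}(Y)X\right>$. Recalling that the bracket inside the curvature is $-[X,Y]$ introduces the decisive extra sign; collecting the three contributions together with the overall minus sign in (\ref{delta}) produces exactly the coefficients $-\tfrac34$ and $+\tfrac12$ of Theorem \ref{arnold}.

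The step demanding the most care --- and the one I expect to be the main obstacle --- is precisely this sign bookkeeping. The interaction of the right-translation convention $ad_X Y = -[X,Y]$ with the right-invariant vector-field bracket in $\theta_{[X,Y]}$ is what turns the naive coefficient $+\tfrac14$ in front of $\left<[X,Y]\,|\,[X,Y]\right>$ into the required $-\tfrac34$ and simultaneously fixes the sign of the $ad_{\mathbb{A}}(X)Y - ad_{\mathbb{A}}(Y)X$ term. I would therefore state the vector-field-bracket convention explicitly before starting, so that the final recombination of terms reproduces Theorem \ref{arnold} unambiguously; the algebra itself is then routine bilinear manipulation requiring no structure beyond (\ref{ada}), Theorem \ref{oneform}, and the symmetry of $\mathfrak{Re}\left<\cdot\,,\cdot\right>_{\mathbb{A}}$.
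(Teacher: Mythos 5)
Your proposal is correct, and it is worth stressing that the paper itself contains \emph{no proof} of Theorem \ref{arnold}: the theorem is stated as the analogue of Arnold's Theorem 5, with the reader referred to \cite{fourier} and \cite[Proposition 2.1]{GPoR}, plus the remark that the rigorous availability of $ad_{\mathbb{A}}$ and $\theta$ turns the classical formal computation into a genuine one. Your computation supplies exactly the argument the paper delegates to the literature, and I have checked that it closes: writing $\theta_X Y = \tfrac12 ad_X Y - \mathfrak{N}(X,Y)$, the commutator part of $-\left< \mathrm{R}_{\mathbb{A}}(X,Y)X | Y\right>$ collapses via metric compatibility to $\left<\theta_Y X | \theta_X Y\right> - \left<\theta_X X | \theta_Y Y\right> = -\tfrac14\left<[X,Y]|[X,Y]\right> + \left<\mathfrak{N}(X,Y)|\mathfrak{N}(X,Y)\right> - \left<\mathfrak{N}(X,X)|\mathfrak{N}(Y,Y)\right>$, while the remaining term $\left<\theta_B X | Y\right>$, with $B$ the bracket entering the curvature, reduces through the adjoint relation $\left<ad_U V|W\right> = \left<V|ad_{\mathbb{A}}(U)W\right>$ and the identities $ad_X[X,Y]=[[X,Y],X]$, $ad_Y[X,Y]=[[X,Y],Y]$ to $\tfrac12\left<[X,Y]\,|\,ad_{\mathbb{A}}(X)Y - ad_{\mathbb{A}}(Y)X\right> - \tfrac12\left<[X,Y]|[X,Y]\right>$ precisely when $B = ad_X Y = -[X,Y]$, giving the coefficients $-\tfrac34$ and $+\tfrac12$ of the statement. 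Your insistence on the bracket convention is the real mathematical content here, not mere bookkeeping: if one instead reads $\theta_{[X,Y]}$ with the associative-algebra bracket $XY-YX$ (which is what $[\cdot,\cdot]$ denotes everywhere else in the paper, given the convention $ad_XY=-[X,Y]$ of Section 4), the identical computation produces $+\tfrac14\left<[X,Y]|[X,Y]\right> - \tfrac12\left<[X,Y]\,|\,ad_{\mathbb{A}}(X)Y - ad_{\mathbb{A}}(Y)X\right>$, contradicting the theorem; so the curvature operator quoted from \cite[Equation (1.10)]{Freed88} must be read with the right-invariant vector-field bracket, exactly as you argue. What your route buys, beyond an actual verification where the paper offers only a citation, is that it makes this sign convention explicit and shows it is forced by the stated formula.
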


We remark once again that this theorem is a rigorous statement on the sectional curvature of the Fr\'echet Lie
group $Cl^{0,*}_{odd}(M,E)$, not a formal result as \cite[Proposition 2.1]{GPoR}. We finish this section computing 
geodesics:

\smallskip

Let us set $G = Cl^{0,*}_{odd}(M,E)$ and $Lie(G) = Cl^{0}_{odd}(M,E)$.
We recall that a {\em spray} over $G$ is a vector field $S : TG \rightarrow TTG$ satisfying 
$T \pi_{G} \circ S = Id_{TG}\,$, in which $\pi_{G} : TG \rightarrow G$ is the canonical projection, and the
homogeneity condition 
$$
(T\mu_t)\cdot S(v) = \frac{1}{t} S(t v)
$$
for $t \neq 0$, in which $\mu_t : TG \rightarrow TG$ is the smooth function $\mu_t(v) = t\,v$. In the present case 
we use $TG = G \times Lie(G)$ and $TTG = G \times Lie(G) \times Lie(G) \times Lie(G)$. Then
$$
S : T G \rightarrow TTG\, , \quad \quad S(g , X) = (g,X,X, ad_\mathbb{A} (X)X ) = (g,X,X, \theta_X X )
$$
for $g \in G$ and $X \in Lie(G)$, is a spray on $G$, as it can be easily checked (see \cite[Section 1.21]{Mic}).
In actual fact, it can be proven that the spray $S$ is precisely the metric spray corresponding to our 
right-invariant metric (\ref{metric1}), see \cite[Section 6.2]{Ci}. 
The integral curves of $S$ are the {\em geodesics} corresponding to the spray $S$. We obtain that $(g(t),X(t))$ 
is a geodesic for the spray $S$ if and only if
\begin{eqnarray*}
\frac{d g}{dt} & = & X \\
\frac{d X}{d t} & = & ad_\mathbb{A} (X)X \; .
\end{eqnarray*}
The second equation is exactly the Euler-Lagrange equation (\ref{lax2}). 
}


\section{Examples on the n-dimensional torus}
In this section, we specialize to $M = \mathbb{T}_n = (S^1)^n$, $n$ odd, equipped with its product metric, where 
$S^1 = \R / 2\pi \Z.$
We recall that the Laplace operator is $$\Delta = - \sum_{i = 1}^n \frac{\partial^2}{\partial x_i^2}\; .$$ 

\subsection{When $Q_0$ is a heat operator}
We set $\mathbb{A}=Id.$  
Let $s \in \R_+^*.$ We define $Q_0 = e^{-s\Delta}.$ This is an injective smoothing operator, hence it is both odd 
and even class. We apply our previous computations to $\mathbb{T}_n$ for any $n \in \N^*$ and we obtain
$$\left< A,B \right> = tr^\Delta\left(A e^{-s\Delta}B\right) =  tr\left(A e^{-s\Delta}B\right)\; .$$

Now we need to define formally  $Q_0^{-1} = e^{s\Delta}$ which is not a pseudodifferential operator but it can be
rigorously defined, as we discussed in Remark \ref{nonlocal}.
 
We obtain the formulas:  
 
 
 $$ ad_\mathbb{A}(X)Z = [Ze^{-s\Delta},X^*]e^{s\Delta}$$
 and 
$$\theta_XY = \frac{1}{2}\left\{[X,Y] { +} [Ye^{-s\Delta},X^*]e^{s\Delta} { +} 
[Xe^{-s\Delta},Y^*]e^{s\Delta}\right\}\; ,$$
and the geodesic equation (\ref{lax2}) reads 
 $$ \frac{dX}{dt} = [Xe^{-s\Delta},X^*]e^{s\Delta}\; .$$
 Let us now test these three equations taking $X \in C^\infty(\mathbb{T}_n, \C)$ and expanding it with respect to 
 the Fourier basis. First, for $n=1, $ i.e. for $ \mathbb{T}_n = S^1.$ Let $(l,m,p) \in \Z^3.$ We obtain
 \begin{eqnarray*}
 	\left(ad_A(z^l)z^m\right)z^p & = & z^me^{-s\Delta}z^{-l}e^{s\Delta} z^p { - z^{-l+m+p} } \\
 	& = & \left(e^{s\left(p^2 - (p-l)^2\right)}-1\right) z^{-l+m+p}\\
 	& = & \left(e^{s\left(2pl - l^2\right)}-1\right) z^{-l+m+p} \; ,
 	\end{eqnarray*}
and
 \begin{eqnarray*}
 	\left(\theta_{z^l}z^m\right)z^p & = & { \frac{-1}{2} } \left\{ \left(-e^{s\left(2pl - l^2\right)} +1\right) 
 	z^{-l+m+p} +\left(-e^{s\left(2pm - m^2\right)} +1\right) z^{l-m+p} \right\}\; .
 \end{eqnarray*}

The same kind of relations can be implemented for $n>1,$ by considering tensor products.

\subsection{When $Q_0$ is a power of the Laplacian}
We set $\mathbb{A}=Id.$  
We now investigate $Q_0 = (\Delta + \pi)^{(n+1)/2},$ where $\pi$ is the $L^2-$ orthogonal projection on the kernel 
of the Laplacian. We remark that $Q_0$ is injective and self-adjoint of order $n+1.$ Moreover, if $n$ is odd, then 
$Q_0$ is odd class, and if $n$ is even, then $Q_0$ is even class.
In this class of examples, we get an operator $Q_0^{-1}$ which is a pseudo-differential operator of order $-n-1,$ in 
the same class as $Q_0.$ Hence the following formulas are fully valid in $Cl_{odd}^0(\mathbb{T}_n,\C): $
 $$ ad_A(X)Z = [ZQ_0,X^*]Q_0^{-1} \in Cl^{-1}_{odd}(\mathbb{T}_n,\C)$$
 
$$ \theta_XY = \frac{1}{2}\left\{[X,Y] - [YQ_0,X^*]Q_0^{-1} -[XQ_0,Y^*]Q_0^{-1}\right\} \; .$$

and the geodesic equation reads 
$$ \frac{dX}{dt} = [XQ_0,X^*]Q_0^{-1} = XQ_0X^*Q_0^{-1} - X X^* \; ,$$
where the right-hand side is an operator of order $-1$ (and hence compact).
Let $M=S^1$ and let us restrict ourselves to $X \in C^\infty(S^1,\C).$
Then, for $p \in \mathbb{Z}$ we obtain 
\begin{equation} \label{int}
\frac{dX}{dt}(z^p) = X(\Delta+\pi)X^*(\Delta+\pi)^{-1}(z^p) - { X X^* } z^p \; .
\end{equation}

Interestingly, in this case we can say more about the integrals of motion $I_k$ considered in Subsection 4.3.
We obtain
$$
I_k = tr^\Delta ( [X (\Delta + \pi) + \xi J^2]^k) = \sum_{j=0}^k \left( \begin{array}{c} h \\ j \end{array} \right) 
tr^\Delta \left( [ X (\Delta + \pi)]^{k-j}J^{2j} \right) \xi^j \; .
$$
Some integrals are trivial (for example, if $j=k$, we obtain the integral of motion $tr^\Delta(J^{2k})$ which does
not depend on the variable $X$) but non-vanishing integrals constructed with different $k$'s cannot 
be all dependent, since the symbols of the pseudodifferential operators $[ X (\Delta + \pi)]^{k-j}J^{2j}$ are all of 
different order. { Thus, it follows that non-vanishing integrals  
$tr^\Delta([ X (\Delta + \pi)]^{k-j}J^{2j})$ are also independent functions.
We can easily prove that indeed there exists a countable family of such non-vanishing functions, at least for a 
large family of initial conditions:

We take $J = Id$ and we evaluate $tr^\Delta([ X (\Delta + \pi)]^{k-j}J^{2j})$ on the initial condition 
$X =  Id + M_{a(x)}e^{-\Delta}M_{\overline{a(x)}}$ in which $M_{a(x)}$ is the multiplication operator by the complex 
valued function $a \in C^\infty(S^1,\C).$ For the sake of simplicity, we take $a(x)=e^{inx}$ for $n \in \Z.$  We 
then have 
\begin{eqnarray*}
	tr^\Delta([ X (\Delta + \pi)]^{k-j}J^{2j}) & = & tr^\Delta((Id +  M_{a(x)}e^{-\Delta}M_{\overline{a(x)}})^{k-j}  
	(\Delta + \pi)^{k-j})\\ & = & tr^\Delta(  (\Delta + \pi)^{k-j}) \\
	&&+ tr(\hbox{trace class, self adjoint positive operator})
\end{eqnarray*}
The second term is strictly positive, while the first term can be computed as the limit of 
$$ 1+ \sum_{k \in \Z^*} (k^2)^{k-j-s}$$ which is equal to $2\zeta(-2k+2j)+1 = \frac{2B_{2(k-j)}}{2k-2j+1}+1$ 
following computations of \cite{CDMP, Ma2020-4} ---in which the authors expand the renormalized trace $tr^\Delta$ in 
the Fourier basis $(x \mapsto e^{inx})_{n \in \Z}\,$--- and the well-known formulas for the $\zeta-$function in 
terms of the Bernoulli numbers \cite{Co}. 

The rigid body equation (\ref{int}) is therefore an example of a non-commutative nonlinear 
differential equation admitting a Lax pair formulation and an infinite number of independent (at least for a large 
number of initial conditions) integrals of motion. It is an integrable equation posed on the Lie algebra of the 
regular Lie group $Cl^{0,*}_{odd}(S^1,E)$.
 }

\medskip

\paragraph{\bf Acknowledgements:} E.G.R.'s research is partially supported by the FONDECYT grant  \#1201894.

\end{document}